\newcommand{\nats}{{\mathbb N}}
\newcommand{\TM}{{\mathbb T}}
\DeclareMathOperator{\card}{Card}
\DeclareMathOperator{\alf}{Alph}
\def\A{\mathbb{A}}
\def\P{\mathcal{P}}
\def\S{\mathcal{S}}
\def\B{\mathbb{B}}
\newtheorem{thm}{Theorem}
\newtheorem{cor}[thm]{Corollary}
\newtheorem{prop}[thm]{Proposition}
\newtheorem{lemma}[thm]{Lemma}
\newtheorem{corollary}[thm]{Corollary}
\newtheorem{proposition}[thm]{Proposition}
\newtheorem{example}[thm]{Example}
\newtheorem{definition}[thm]{Definition}
\newcommand{\eps}{\varepsilon} 
\begin{document}

\begin{frontmatter}

\title{On Generating Binary Words Palindromically}

\author[label1]{Tero Harju}
\ead{harju@utu.fi}
  
\author[label1,label2]{Mari Huova\corref{cor1}}
\ead{mahuov@utu.fi}
  
\author[label1,label3]{Luca Q. Zamboni\fnref{label4}}
 \ead{lupastis@gmail.com}
  
\fntext[label4]{Partially supported by a FiDiPro grant (137991) from the Academy of Finland and by
ANR grant {\sl SUBTILE}.}
 \cortext[cor1]{Corresponding author.}
\address[label1]{Department of Mathematics and Statistics \& FUNDIM, University of Turku, Finland}
\address[label2]{Turku Centre for Computer Science}
\address[label3]{Universit\'e de Lyon,
Universit\'e Lyon 1, CNRS UMR 5208,
Institut Camille Jordan,
43 boulevard du 11 novembre 1918,
F69622 Villeurbanne Cedex, France}

\begin{abstract}
We regard a finite word $u=u_1u_2\cdots u_n$ up
to word isomorphism as an equivalence relation on $\{1,2,\ldots , n\}$
where $i$ is equivalent to $j$ if and only if $u_i=u_j.$ Some finite words
(in particular all binary words) are generated by {\it palindromic} relations of the form
$k\sim j+i-k$ for some choice of  $1\leq i\leq j\leq n$ and $k\in \{i,i+1,\ldots ,j\}.$
That is to say, some finite words $u$ are uniquely determined up to word isomorphism
by the position and length of some of its palindromic factors.
In this paper we study the function $\mu(u)$ defined as the least number of
palindromic relations required to generate $u.$
We show that if $x$ is an infinite word such that $\mu(u)\leq 2$ for each factor $u$ of $x,$ then $x$ is ultimately periodic. On the other hand we establish the existence of non-ultimately periodic words for which $\mu(u)\leq 3$ for each factor $u$
of $x,$ and obtain a complete classification of such words on a binary alphabet (which
includes the well known class of Sturmian words). In contrast, for the Thue-Morse word, we
show that the function $\mu$ is unbounded.\end{abstract}

\begin{keyword} Palindromes, Sturmian words, balanced words.
\MSC 68R15
\end{keyword}

\end{frontmatter}

\section{Introduction}

Palindromes play an important role in various areas of mathematics including diophantine
approximation and number theory (see for instance \cite{AdBu,Co}), discrete math (see for
instance \cite{GlJuWiZa,BrHaNiRe}), algebra (see for instance \cite{KaRe,De}),
biomathematics (see for instance \cite{KaMa}), geometric symmetry in translation surfaces
associated with various dynamical systems including interval exchange transformations (see
for instance \cite{FeZa}) and theoretical physics in the spectral properties of discrete
Schr\"odinger operators defined on quasicrystals (see for instance \cite{HoKnSi}).

Let $\A$ be a finite non-empty set of \emph{letters}. For each positive integer $n$,
let $\S(n)=\{(i,j)\, \mid \, 1\leq i\leq j\leq n\}.$   Let $u=u_1u_2\cdots u_n\in \A^n$ be a word of length $n$ with values in $\A.$ Let $\alf (u)=\{u_i\,|\, 1\leq i\leq n\}$ denote the subset of $\A$ consisting of all letters occurring in $u.$
For each pair $(i,j)\in \S(n),$ we denote by
$u[i,j]$ the {\it factor} $u_iu_{i+1}\cdots u_j.$  In case $i=j$, we write  $u[i]$ in lieu of
$u[i,i].$
Finally, let $\P$ denote the collection of all palindromes.

\begin{definition}\label{gen}
\rm Let  $u \in \A^n.$ We say $u$ is \emph{palindromically generated} if there exits $S\subseteq \S(n)$ verifying the following two conditions:
\begin{itemize}
\item $u[i,j]\in \P$ for all $(i,j)\in S$,
\item for each non-empty set $\B$ and  $v\in \B^n$, if $v[i,j]\in \P$ for all $(i,j)\in S$
then there exists a mapping  $c\colon \alf (u)\rightarrow \B$ which extends to a morphism
$c\colon \alf(u)^*\rightarrow \B^*$
of words such that $c(u)=v.$
\end{itemize}
\end{definition}

We call the elements of $S$ {\it generators} (or {\it palindromic generators}), and we say that $S$ {\it palindromically generates} $u.$ It follows immediately from Definition~\ref{gen} that if two words $u
\in \A^n$ and $v \in \B^n$  of length $n$
are palindromically generated by the same set $S\subseteq \S(n),$ then $u$ and $v$ are word isomorphic, i.e., there is a bijection
$\nu\colon \alf(u) \to \alf(v)$ which extends to a morphism of words such that $\nu(u)=v.$

\begin{example}\normalfont{%
For each letter $a\in \A,$ the word $u=a$ is palindromically generated by the empty set or by the singleton set $S=\{(1,1)\}.$  The
word $u=a^2$ is palindromically generated by any one of the following sets: $\{(1,2)\}, \{(1,2),(1,1)\}, \{(1,2),(2,2)\}, \{(1,2),(1,1),(2,2)\}.$ 
Similarly, given distinct symbols $a,b\in \A,$ the word $u=ab$ is palindromically generated by any one of the following sets: $\emptyset, \{(1,1)\}, \{(2,2)\}, \{(1,1),(2,2)\}.$ 
For $n\geq 3$, both $\{(1,n-1),(1,n)\}$ and
$\{(1,n),(2,n)\}$ palindromically generate  $a^n.$}
\end{example}

Given a word $u\in \A^n,$  let $\mu(u)$ denote the infimum of the cardinality of all sets
$S\subseteq \S(n)$ that palindromically generate $u,$ i.e.,
\[\mu(u)=\inf \{\card S\,|\,S\subseteq\S(|u|) \,\mbox{palindromically generates}\,\,u\}.\]
By convention, we put $\inf(\emptyset) = +\infty.$ 
Thus for $u\in \A^n,$ we have $\mu(u)<+\infty$ if and only if $u$ is palindromically generated, and $\mu(u)=0$ if and only if $\card\alf (u)=n.$ 
It is easily checked that $\mu(a^2)=1$  and $\mu(a^n)=2$ for $n\geq 3.$ The word $u=00101100$ is palindromically generated by  $S=\{(1,2),(2,4),(3,5),(4,7),(7,8)\}.$ One can check that  $S$ is the smallest
palindromic generating set whence  $\mu(00101100)=5.$

More generally, every binary word $u$ is palindromically generated. 
Indeed, it is readily verified that for each
nonempty word $u\in \{0,1\}^*,$ the set
\[
S_u = \{ (i,j) \mid u[i,j]=ab^ka \ \text{ for }  \{a,b\} = \{0,1\},  \ k \geq 0\}
\]
palindromically generates $u.$ 

 In contrast,  the ternary word
$abca$ is not palindromically generated by any subset $S$ of $\S(4),$ whence $\mu(abca)=+\infty.$  For this reason, we
shall primarily restrict ourselves to binary words. We note that the property of being palindromically generated is not closed under the action of a morphism: For instance, $abcd$ is palindromically generated while $abca$ is not. 

Given an infinite word $x\in \A^\nats,$ we are interested in the quantity
\[\psi (x) =\sup \{\mu(u)\,|\, u\,\,\mbox{is a factor of}\,\,x\}.\]

\noindent A first basic result states that if $x\in \A^\nats$ is aperiodic (i.e., not ultimately periodic), then $\psi (x)\geq 3$ (see Proposition \ref{aperiodic2}). This is clearly not a characterization of aperiodic words since, for instance, $\psi(x)=+\infty$ for any infinite word $x$ containing $abca$ as a factor, including the periodic word $(abc)^\omega.$ As another example, it is readily checked that $\psi((aababb)^\omega)=+\infty.$  

Let $\TM\in \{0,1\}^\nats$ denote the  {\it Thue-Morse} word
\[
\TM =0110100110010110100101100110100110010110\ldots
\]
whose origins go back to the beginning of the last century
with the works of  Axel Thue \cite{Th1}.
The $n$th entry $t_n$ of $\TM$ is defined as the sum modulo $2$ of the digits in the binary expansion of $n$. 
Alternatively, $\TM$ is the fixed point beginning in $0$
of the morphism $\tau\colon \{0,1\}^* \to \{0,1\}^*$ given by $\tau(0)=01$ and
$\tau(1)=10$. 
We prove that $\psi(\TM)=+\infty.$  Indeed, in Proposition \ref{TM} we show that for each integer $n>0$ there  exists a factor $u$ of $\TM$ with $\mu(u)\geq n$.

In this paper we establish the existence of aperiodic binary words $x$ for which
$\psi(x)=3.$  Moreover, we obtain a complete description of such words. In order to state our main results, we recall that an infinite binary word $x \in \{0,1\}^\nats$ is called {\it Sturmian} if $x$ contains exactly
$n+1$ factors of each given length $n\ge 1.$ As is well known, each Sturmian word $x$ is aperiodic and uniformly recurrent, i.e, each factor of $x$ occurs in $x$ with bounded gap.
In order to state our main
result we will make use of the following morphisms: For each subset $A \subseteq  \{0,1\},$
we denote by  $d_A\colon \{0,1\}^* \to \{0,1\}^*$ the {\it doubling morphism} defined by
the rule
\[
d_A(a)=\begin{cases}aa & \text{ if } a \in A\\
a &\text{ if } a \notin A.\end{cases}
\]

\begin{definition}\label{DS} \rm{A word $y\in \{0,1\}^\nats$ is called \emph{double Sturmian}
if $y$ is a suffix of $d_A(x)$ for some Sturmian word $x$ and  $A\subseteq \{0,1\}.$  In
particular, taking $A=\emptyset,$ it follows that every Sturmian word is double Sturmian.}
\end{definition}

\noindent We note that if $y$ is double Sturmian, then there exists a Sturmian word $x,$  a
subset $A\subseteq \{0,1\}$ and $a\in A$ such that  $d_A(x)\in \{y, ay\}.$ 

\noindent
\begin{thm}\label{main} Let  $u\in \{0,1\}^+.$ Then $\mu(u)\leq 3$ if and only if $u$ is a factor
of a double Sturmian word.
\end{thm}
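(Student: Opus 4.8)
The plan is to recast palindromic generation in purely combinatorial terms. A palindrome $u[i,j]$ imposes exactly the relations $k\sim i+j-k$; that is, it acts by the reflection $\sigma_{i,j}\colon k\mapsto i+j-k$ about the centre $(i+j)/2$, restricted to $[i,j]$. Given $S\subseteq\S(n)$, let $R_S$ be the equivalence relation on $\{1,\dots,n\}$ generated by all these reflections, and let $R_u$ be the partition of positions according to the letter they carry. Since $u$ satisfies every palindrome in $S$, we always have $R_S\subseteq R_u$; unwinding Definition~\ref{gen} (testing against the map into a large alphabet that separates the $R_S$-classes) shows conversely that $S$ palindromically generates $u$ if and only if $R_S=R_u$. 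For a binary word using both letters $R_u$ has exactly two classes, so $\mu(u)\le 3$ means precisely that this two-block colour partition can be produced by at most three reflections. I would record this equivalence as the first lemma, since it converts the whole theorem into a statement about reflections.

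The engine is that composing two reflections yields a translation: $\sigma_{i',j'}\circ\sigma_{i,j}$ acts as $k\mapsto k+d$ with $d=(i'+j')-(i+j)$ equal to twice the distance between the two centres. Hence wherever two palindromes overlap, $R_S$ forces $u_k=u_{k+d}$, i.e.\ a local period $d$. Three palindromes with centres $c_1<c_2<c_3$ thus equip $u$ with two local periods $p=2(c_2-c_1)$ and $q=2(c_3-c_2)$ on overlapping windows, glued by the reflections themselves. Two coprime periods whose sum just exceeds the length is exactly the two-period structure of central words — it sits beyond the Fine--Wilf bound, so it stays aperiodic and balanced — which is why Sturmian words appear. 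The morphism $d_A$ should enter through parity: an even-length palindrome reflects with no fixed centre point, which is what lets a repeated letter be absorbed.

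For the ``if'' direction I would first treat the Sturmian case. Every factor of a Sturmian word lies inside a factor $a\,w\,b$ with $a,b\in\{0,1\}$ and $w$ a central word, where $w$ is a palindrome with two coprime periods $p,q$ satisfying $p+q=|w|+2$. Choosing three suitable palindromes (e.g.\ the longest palindromic prefix and suffix together with one shorter palindrome), I would arrange that two of the induced reflections compose to a translation by a period of $u$ while the third closes up the remaining orbit, then check directly that $R_S$ has exactly the two colour classes, giving $\mu(u)\le 3$. To pass to double Sturmian words I would analyse how a factor of $d_A(x)$ arises from a Sturmian factor by doubling the letters of $A$: doubling shifts the palindrome centres by half-integers but preserves the reflection--translation structure, and a short case analysis (on which of the two letters is doubled and on the truncation from passing to a suffix) upgrades a generating triple for the Sturmian factor to one for its image while keeping the count at three.

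The ``only if'' direction is where I expect the real difficulty, and I would treat it as the heart of the proof. Starting from three palindromes whose reflections generate the two-block partition $R_u$, I would study the orbit structure of the pseudogroup they generate: the compositions give translations by $p$ and $q$, and the goal is to force the two colour classes to be the two balanced (mechanical) position sets of a double Sturmian word. Concretely, outside a bounded region where the three palindromes interlock, $u$ is governed by the two periods, so it is balanced there; the finitely many configurations at the interlocking region, classified according to the parities of the three palindrome lengths and their relative positions, should correspond exactly to the at most one doubled letter permitted by a subset $A\subseteq\{0,1\}$ together with the truncation allowed in passing to a suffix of $d_A(x)$. Carrying out this classification and ruling out every configuration that would force an unbalanced factor is the main obstacle; once the reflection reformulation is in place, the remainder is bookkeeping.
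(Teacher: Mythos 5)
Your ``if'' direction is essentially the paper's own route: your reflection reformulation is the paper's Lemma~\ref{crit}, your choice of generators (longest palindromic prefix, longest palindromic suffix, plus the central word itself) is exactly the set $S'=\{(1,|U|),(|y'|-|V|+1,|y'|),(2,|x|+1)\}$ used in Proposition~\ref{A}, your appeal to two coprime periods with $p+q=|w|+2$ is the paper's use of Lemmas~\ref{CenFW1} and~\ref{CenFW2}, and your parity remark about $d_A$ corresponds to Lemma~\ref{double} (doubling the centre of an odd-length palindrome). That half of the proposal is sound in outline, modulo one point you leave implicit: to apply the doubling you need an odd-length generator whose \emph{centre carries the letter being doubled}, for each letter of $A$; the paper secures this by showing two of the three generators have odd length with \emph{distinct} central symbols (via coprimality of $|U|,|V|$ and the leaf-counting Lemma~\ref{leaves}).

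The genuine gap is in the ``only if'' direction, which you correctly identify as the heart of the matter but then dismiss as ``classification plus bookkeeping.'' Two ideas are missing, and without them the plan does not close. First, your stated target --- show that away from the region where the three palindromes interlock the word is ``governed by two periods, so balanced there'' --- is the wrong target: a word with $\mu(w)\le 3$ is in general \emph{not} balanced (e.g.\ $w=0010011$ contains both $00$ and $11$), because doubled letters destroy balance. One must first strip the doubling, and making that precise requires the paper's construction of $A(w)$ (defined by which letters never occur as $ba^{2k+1}b$) and of the \emph{lean word} $u$ with $w$ a factor of $d_{A(w)}(u)$; the statement actually proved (Lemma~\ref{conlemma}) is that the \emph{lean} word is balanced. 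Nothing in your proposal identifies which letters were doubled or proves the stripped word is well defined and balanced. Second, the combinatorial engine that rules out bad configurations is absent. The paper reduces unbalancedness to the existence of palindromic factors $aua$ and $bub$ (Lemma~\ref{lothaire}), and then proves the key Lemma~\ref{zeros}: if $\mu(w)\le 3$ and both $aua$, $bub$ occur, then $u$ is a power of a single letter of \emph{even} length. The proof of that lemma, and of the auxiliary forbidden-factor Lemma~\ref{triplets}, rests on a structural fact you never formulate: an unbordered binary word generated by at most three palindromes has at most two leaves per label (Lemma~\ref{leaves}), where a leaf is a position moved by at most one generator. Your pseudogroup picture (reflections composing to translations) is a fine rephrasing of how periods arise, but it supplies no substitute for this leaf count, and it is exactly this count that kills the ``interlocking configurations'' you propose to classify; the final contradiction argument in Lemma~\ref{conlemma} additionally needs a minimal-counterexample induction, not a finite check. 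As it stands, the converse half of your proposal is a research plan rather than a proof.
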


\noindent Combining Theorem~\ref{main} and Proposition~\ref{aperiodic2} we deduce that:

\begin{cor}\label{infinite main} Let $x\in \{0,1\}^\nats$ be aperiodic.
Then $\psi(x)=3$ if and only if $x$ is double Sturmian. In particular, if $\psi(x)=3$ then $x$ is
uniformly recurrent. \end{cor}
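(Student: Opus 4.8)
The plan is to deduce Corollary~\ref{infinite main} from Theorem~\ref{main} and Proposition~\ref{aperiodic2} by a compactness argument whose role is to upgrade a statement about all finite factors of $x$ to a statement about $x$ itself. Since $x$ is aperiodic, Proposition~\ref{aperiodic2} gives $\psi(x)\ge 3$, so for such $x$ the equality $\psi(x)=3$ is equivalent to $\psi(x)\le 3$, that is, to $\mu(u)\le 3$ for every factor $u$ of $x$. By Theorem~\ref{main} this is in turn equivalent to the statement that every factor of $x$ is a factor of some double Sturmian word. Thus the corollary reduces to proving that, for aperiodic $x$, this last property holds if and only if $x$ is itself double Sturmian. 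One implication is immediate, since every factor of a double Sturmian word is a factor of a double Sturmian word.

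For the converse I would first record that the class of double Sturmian words is closed under taking suffixes: a suffix of a suffix of $d_A(s)$ is again a suffix of $d_A(s)$. Hence if the prefix $x[1,n]$ occurs in some double Sturmian word, then the suffix of that word beginning at the occurrence is double Sturmian and has $x[1,n]$ as a prefix; call it $y_n$. As $n$ grows the words $y_n$ agree with $x$ on longer and longer prefixes, so $y_n\to x$ in the product topology and $x$ belongs to the closure $\overline{\mathcal{D}}$ of the set $\mathcal{D}\subseteq\{0,1\}^\nats$ of double Sturmian words. It therefore suffices to prove that every aperiodic element of $\overline{\mathcal{D}}$ is double Sturmian.

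To identify $\overline{\mathcal{D}}$, I would use the remark following Definition~\ref{DS}, which shows that every double Sturmian word has one of the two forms $d_A(s)$ or $\sigma(d_A(s))$ for a Sturmian word $s$, where $\sigma$ denotes the shift. Thus $\mathcal{D}$ is a finite union (over the four sets $A\subseteq\{0,1\}$ and the two phases) of continuous images of the set $\mathrm{St}$ of Sturmian words. Since $\mathrm{St}$ is contained in the compact set $\mathrm{Bal}$ of balanced words, and $d_A$ and $\sigma$ are continuous, the closure of each such image is contained in the corresponding image of $\mathrm{Bal}$, so every element of $\overline{\mathcal{D}}$ is of the form $d_A(w)$ or $\sigma(d_A(w))$ with $w$ balanced. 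Now if such a word is aperiodic then $w$ is aperiodic, because $d_A$ and $\sigma$ carry periodic words to periodic words; and an aperiodic balanced binary word is Sturmian. Hence $w\in\mathrm{St}$, so the word is a suffix of $d_A(w)$ and therefore double Sturmian, as required.

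The main obstacle is the closure computation of the previous paragraph: it rests on the reduction of arbitrary suffixes of $d_A(s)$ to the two aligned phases $d_A(s)$ and $\sigma(d_A(s))$ (the content of the remark after Definition~\ref{DS}), on the compactness of $\mathrm{Bal}$ which makes the images $d_A(\mathrm{Bal})$ closed, and on the classical fact that a balanced aperiodic binary word is Sturmian. Finally, for the \emph{in particular} clause: a double Sturmian word is a suffix of $d_A(s)$ with $s$ Sturmian, hence uniformly recurrent; since each factor of $d_A(s)$ lies inside the $d_A$-image of a factor of $s$, and the latter recurs in $s$ with bounded gaps, $d_A(s)$ is uniformly recurrent, and uniform recurrence is inherited by suffixes. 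Thus $\psi(x)=3$ forces $x$ to be uniformly recurrent.
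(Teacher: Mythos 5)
Your proof is correct, and it reaches the corollary by a genuinely different route from the paper's. The paper does not derive the converse direction from Theorem~\ref{main} as a black box: instead it re-runs the lean-word machinery directly on the infinite word $x$, taking the set $A(x)$ and the lean word $y$ of $x$ (so that $x$ is a suffix of $d_{A(x)}(y)$), and then proves $y$ is balanced factor-by-factor, by extending each factor $v$ of $y$ to a factor $u$ whose image $d_{A(x)}(u)$ satisfies $A(d_{A(x)}(u))=A(x)$ and applying Lemma~\ref{heritage} together with Lemma~\ref{conlemma}; balance plus aperiodicity then makes $y$ Sturmian. You instead upgrade the finite statement of Theorem~\ref{main} to the infinite word by a topological argument: suffix-closure of the double Sturmian class $\mathcal{D}$ puts $x$ in $\overline{\mathcal{D}}$, and you show $\overline{\mathcal{D}}$ is contained in the union of the eight sets $d_A(\mathrm{Bal})$ and $\sigma(d_A(\mathrm{Bal}))$, using the remark after Definition~\ref{DS} to reduce to the two phases, continuity of $d_A$ and $\sigma$, and compactness (closedness) of the set $\mathrm{Bal}$ of balanced words; aperiodicity of $x$ then forces the balanced preimage $w$ to be aperiodic, hence Sturmian, so $x$ is double Sturmian. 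Both arguments are sound and rest on the same classical fact (aperiodic plus balanced equals Sturmian), but they buy different things: the paper's proof is constructive and purely combinatorial, explicitly exhibiting the Sturmian word and the doubling set that de-double $x$, and it recycles the apparatus already built for Proposition~\ref{converse}; yours is shorter, modular, and non-constructive, never identifying the de-doubling of $x$, and it isolates a reusable principle, namely that an aperiodic word all of whose factors occur in members of a suffix-closed family must lie in the closure of that family, which here can be computed. One further point in your favor: the paper leaves the ``in particular'' clause implicit, while you record the needed (standard) verification that uniform recurrence survives the doubling morphism and passage to suffixes.
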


\section{Preliminaries}

Throughout this section $\A$ will denote a finite non-empty set.
We denote by $\A^+$ the set of all {\it words}  (or finite sequences)
of the form $w=a_1a_2\cdots a_n$ with $a_i\in \A.$ We write $|w|=n$ for the length of
$w.$ We denote the empty word by $\eps$ and set $\A^*=\A^+\cup \{\eps\}.$  Given
$w,u,v\in \A^+$ with $w=uv$, we write $v=u^{-1}w$ and $u=wv^{-1}.$  For
$w=a_1a_2\cdots a_n \in \A^n$  we denote by $w^R$ the reversal (or mirror image)
$a_n\cdots a_2a_1.$   A word $w$ is a \emph{palindrome} if $w=w^R$. Let $\P$ denote
the set of all palindromes (over any alphabet).

Given two non-empty words $u$ and $v,$ we say $u$ is a \emph{border}  of $v$ if $u$ is
both a proper prefix and a proper suffix of $v.$   If~$v$ has no borders then it is said to be
\emph{unbordered}. A word $u\in \A^+$ is called {\it Lyndon} if $u$ is lexicographically smaller than each of its cyclic conjugates relative to some linear order on $\A.$ In particular, as is well known,  each Lyndon word is both primitive and unbordered.

Let $\A^\nats$ denote the set of all infinite words with values in $\A.$ An infinite word $\omega \in \A^\nats$ is \emph{aperiodic} if it is not of the
form $\omega=uv^{\nats}=uvv\cdots$ for some words $u,v \in \A^*$ with $v \ne \eps$. The
following result was shown by Saari in \cite{Saari}. It improves an earlier result of
Ehrenfeucht and Silberger in~\cite{EhrSil}. We shall give a simplified proof of this result.

\begin{lemma}\label{Lyndon}
Each aperiodic infinite word $\omega$ contains an infinite number of Lyndon words. In
particular $\omega$ has arbitrarily long unbordered factors.
\end{lemma}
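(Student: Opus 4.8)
The plan is to fix a linear order on the alphabet (the one relative to which Lyndon words are defined) and to reduce the statement to producing \emph{arbitrarily long} Lyndon factors: since a Lyndon word is unbordered and there are only finitely many words of each length, a single infinite family of distinct Lyndon factors automatically has unbounded length. This would simultaneously give the infinitude of Lyndon factors and the arbitrarily long unbordered factors asserted in the ``in particular''.

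First I would extract a canonical infinite word. For each $n$ let $M_n$ be the lexicographically least factor of $\omega$ of length $n$. A short argument shows that $M_n$ is a prefix of $M_{n+1}$ (were the length-$n$ prefix of $M_{n+1}$ larger than $M_n$, then extending $M_n$ by one letter would beat $M_{n+1}$), so the $M_n$ converge to an infinite word $M$ each of whose prefixes $M[1,n]=M_n$ is a least factor. Consequently every factor $M[k+1,n]$ is $\ge M_{n-k}=M[1,n-k]$; letting $n\to\infty$ gives $\sigma^k M\ge M$ for all $k$, and in each finite prefix every proper suffix is $\ge$ the prefix of equal length. From this I would record the convenient equivalence: a least factor $M_n$ is Lyndon if and only if it is unbordered, the point being that the only way a proper suffix can fail to strictly exceed the whole word is by coinciding with an equal-length prefix, i.e.\ by being a border.

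I would then split on whether $M$ is aperiodic. If $M$ is aperiodic, then $\sigma^k M>M$ strictly for every $k\ge 1$, so $M$ is an infinite Lyndon word; the standard fact that such a word is the increasing limit of its Lyndon prefixes provides infinitely many $n$ with $M[1,n]$ Lyndon, and these are Lyndon factors of $\omega$. One can also argue this directly: at each length where the shortest period of $M[1,n]$ increases, the inequality $\sigma^{\pi}M>M$ forces the new letter to exceed the letter predicted by the period, and appending a letter larger than predicted to a prefix of a power of a Lyndon word yields a Lyndon word. If instead $M=p^{\nats}$ with $p$ Lyndon, then every power of $p$ is a factor of $\omega$, and here aperiodicity of $\omega$ enters essentially. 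Since $\omega$ is not eventually equal to $p^{\nats}$, for every $L$ some occurrence of the length-$L$ prefix of $p^{\nats}$ extends to a factor $u$ that is a prefix of $p^{\nats}$ of length $\ge L$ and is then followed in $\omega$ by a letter $c$ breaking the periodicity. Minimality of $M$ forces $c$ to be \emph{larger} than the letter $p^{\nats}$ would place there, because $uc$ is a factor of length $|u|+1$, hence $\ge M_{|u|+1}$, which equals $u$ followed by the predicted letter. It then remains to check that such a word $uc$ is Lyndon: comparing $uc$ with its proper suffixes, the period-aligned suffixes are larger because they carry the large letter $c$ exactly where $uc$ carries the smaller predicted letter, while the remaining suffixes are larger because $p$ is Lyndon. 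This produces Lyndon factors $uc$ of unbounded length.

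The hard part will be the periodic case: verifying cleanly that the ``break-upward'' word $uc$ is Lyndon—in particular handling short, non-period-aligned suffixes in the suffix comparison—and confirming that aperiodicity guarantees a breaking occurrence for every target length $L$. The aperiodic case is conceptually routine given the theory of infinite Lyndon words; the only technical care there, if one wants it self-contained, is the ``period-increase forces an upward break'' step and the fact that the period root is genuinely Lyndon.
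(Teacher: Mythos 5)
Your proposal is correct, but it takes a genuinely different route from the paper. The paper argues by contradiction through the Lyndon factorization: assuming $\omega$ has only finitely many Lyndon factors, it writes $\omega=u_1u_2\cdots$ with each $u_i$ the longest Lyndon prefix of the remaining suffix (a longest one exists precisely because of the finiteness assumption), observes that $u_{i+1}\preceq u_i$ --- since $u_i\prec u_{i+1}$ would make $u_iu_{i+1}$ a longer Lyndon prefix --- and concludes that the sequence, being non-increasing and taking finitely many values, is eventually constant, so $\omega=u_1\cdots u_{j-1}u_j^{\nats}$ is ultimately periodic, a contradiction. You instead build the lexicographically minimal infinite word $M$ whose prefixes are the minimal factors $M_n$ of $\omega$, prove suffix-minimality ($\sigma^k M\ge M$ for all $k$), and split according to whether $M$ is aperiodic (then $M$ is an infinite Lyndon word and its infinitely many Lyndon prefixes are factors of $\omega$) or $M=p^{\nats}$ with $p$ Lyndon (then aperiodicity of $\omega$ yields, beyond every length $L$, a factor $uc$ of $\omega$ with $u$ a prefix of $p^{\nats}$ and $c$ strictly larger than the predicted letter --- strictness being forced by minimality of $M$ --- and $uc$ is Lyndon by Duval's extension lemma). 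The facts you defer to are standard and do go through: for instance, the delicate suffix case you flag, a suffix $rc$ of $uc$ with $r$ simultaneously a prefix of $p$ and a suffix of $u$, is settled by noting that $rc'$ (with $c'$ the predicted letter) is a factor of $p^{\nats}$, hence $c'$ is at least the letter following $r$ in $p$, so $c$ strictly exceeds that letter; and the period root $p$ is indeed Lyndon, since every rotation $(yx)^{\nats}$ of $M=(xy)^{\nats}$ is a shift of $M$ and hence $\geq M$. What the paper's proof buys is brevity and minimal machinery --- a single closure property of Lyndon words plus pigeonhole. What yours buys is a constructive payoff: it exhibits explicitly where the arbitrarily long Lyndon factors sit (as minimal factors of $\omega$, or as upward periodicity breaks), at the cost of importing a larger share of the Duval/pre-necklace toolkit.
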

\begin{proof}
Let $\preceq$ be a lexicographic ordering of words, and suppose to the contrary that
$\omega$ contains only finitely many Lyndon factors. We  write $\omega=u_1u_2 \cdots$
where for each $i\geq 2$ we have that $u_i$ is the longest Lyndon word that is a prefix of
the suffix $(u_1 \cdots u_{i-1})^{-1}w$. Then, for all~$i$, we have $u_{i+1} \preceq u_i$
since otherwise $u_iu_{i+1}$ would be a Lyndon prefix longer than $u_i$. Thus there exists
a positive integer $j$ such that $w=u_1 \cdots u_{j-1}u_j^\omega,$ contradicting that
$\omega$ is aperiodic.

\end{proof}

Let $n$ be a positive integer, and  $\S(n)=\{(i,j)\, \mid \, 1\leq i\leq j\leq n\}.$
For each pair $I=(i,j) \in \S(n)$, we define a function \[\rho_I \colon \{i,i+1,\ldots ,j\}\rightarrow
\{i,i+1,\ldots ,j\}\] called a \emph{reflection} by $\rho_I(k)= i+j-k.$ The function $\rho_I$ is
an involution, i.e., it is a permutation that satisfies $\rho_I(\rho_I(k))=k$ for all $i\leq k\leq
j.$
For $u\in \A^n$ we have that $u[i,j]\in \P$ if and only if $u[k]=u[\rho_I(k)]$ for each $i\leq
k\leq j.$ If $J=(i',j')$ with $i\leq i'\leq j'\leq j,$ we denote by $\rho_I(J)$  the reflected pair
$(\rho_I(j'),\rho_I(i')).$

\noindent The following lemma gives  a reformulation of Definition~\ref{gen}:

\newcommand{\eqv}{\theta}
\begin{lemma}\label{crit}
Let $u \in \A^n$ and $S \subseteq \S(n)$. The
following conditions are equivalent:
\begin{itemize}
\item[(i)]
$S$ palindromically generates $u$.
\item[(ii)]
for each  $1\le i<j \le n$,  we have $u[i]=u[j]$ if and only if
\begin{align}
&\text{there exists a finite sequence
(or {\it path}) } (I_t)_{t=1}^r\in S^r\label{EQV}\tag{$\star$}\\
&\text{such that }   j=\rho_{I_r} \rho_{I_{r-1}} \cdots \rho_{I_1}(i).\notag
\end{align}
\end{itemize}
\end{lemma}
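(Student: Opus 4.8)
The plan is to recast both conditions in terms of a single equivalence relation on $\{1,\ldots,n\}$ attached to $S$. For each $I=(i,j)\in S$ and each $i\le k\le j$, the pair $\{k,\rho_I(k)\}$ records that positions $k$ and $\rho_I(k)$ are forced to carry the same letter by the palindrome at $I$; since $\rho_I$ is an involution this generates a symmetric relation, and I let $\sim_S$ denote its reflexive–transitive closure. The starting observation is that, for $i<j$, the path condition~\eqref{EQV} holds precisely when $i\sim_S j$: a path $(I_t)_{t=1}^r$ with $j=\rho_{I_r}\cdots\rho_{I_1}(i)$ is exactly a chain $i=k_0,k_1,\ldots,k_r=j$ with $k_t=\rho_{I_t}(k_{t-1})$, and conversely such a chain assembles into a path. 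Thus condition (ii) says nothing more than that $\sim_S$ coincides with the equivalence relation $R_u$ on $\{1,\ldots,n\}$ defined by $k\,R_u\,k'$ if and only if $u[k]=u[k']$.

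Next I would translate the two clauses of Definition~\ref{gen} into inclusions between $\sim_S$ and $R_u$. The first clause, that $u[i,j]\in\P$ for every $(i,j)\in S$, says exactly that every generating reflection preserves the letters of $u$; taking closures, this is the inclusion $\sim_S\subseteq R_u$. For the second clause, the key elementary fact is that a map $c\colon\alf(u)\to\B$ extends to a morphism with $c(u)=v$ if and only if $v$ is a recoloring of $u$, i.e. $R_u\subseteq R_v$ (define $c(u[k]):=v[k]$, which is well defined precisely under this inclusion). Likewise the hypothesis $v[i,j]\in\P$ for all $(i,j)\in S$ is the inclusion $\sim_S\subseteq R_v$. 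Hence the universality clause reads: every $v$ with $\sim_S\subseteq R_v$ also satisfies $R_u\subseteq R_v$.

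With these reformulations the equivalence is short. To prove (i)$\Rightarrow$(ii) I start from $\sim_S\subseteq R_u$ (first clause) and must produce the reverse inclusion. The decisive move is to construct a canonical witness: let $\B$ be the set of $\sim_S$-classes and define $v\in\B^n$ by letting $v[k]$ be the class of $k$, so that $R_v=\sim_S$. This $v$ satisfies the palindrome constraints, since $\sim_S\subseteq R_v$, so universality furnishes a morphism $c$ with $c(u)=v$, whence $R_u\subseteq R_v=\sim_S$. Combined with $\sim_S\subseteq R_u$ this gives $\sim_S=R_u$, which is (ii). For (ii)$\Rightarrow$(i), the equality $\sim_S=R_u$ yields the first clause at once, and for the second clause any $v$ with $\sim_S\subseteq R_v$ satisfies $R_u=\sim_S\subseteq R_v$, so the recoloring $c(u[k]):=v[k]$ is well defined and gives $c(u)=v$.

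The only genuinely delicate point is the construction of the canonical witness word $v$ in the (i)$\Rightarrow$(ii) direction: one must choose $v$ whose equal-letter relation is \emph{exactly} $\sim_S$ and no finer, so that feeding it into the universality hypothesis forces out the otherwise-missing inclusion $R_u\subseteq\sim_S$. Everything else is a routine unwinding of definitions, the main care being to keep straight that the existence of a recoloring morphism $c$ with $c(u)=v$ is equivalent to the inclusion $R_u\subseteq R_v$ of equal-letter relations.
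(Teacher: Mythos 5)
Correct, and essentially the paper's own argument: both form the equivalence relation generated by the reflections $\rho_I$, $I\in S$, observe that the path condition \eqref{EQV} describes exactly this relation, and then apply the universality clause of Definition~\ref{gen} to force it to coincide with the equal-letter relation of $u$. Your explicit canonical witness $v$ whose letters are the $\sim_S$-classes is precisely what the paper's terser step (that ``each equivalence class of $\theta^*$ corresponds to a different letter in $\A$'') relies on, so the two proofs agree in substance.
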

\begin{proof}
Define a relation $\eqv$ as follows. Let $i \eqv j$ if and only if there exists an $I \in
S$ such that $j=\rho_I(i)$. Denote by $\eqv^*$ the reflexive and transitive closure of $\eqv$.
The relation $\eqv$ is symmetric since $\rho_I$ is an involution and thus
$\eqv^*$ is an equivalence relation. Here $i \eqv^* j$ if and only if \eqref{EQV} holds for
some sequence $(I_t)_{t=1}^r$ of elements from $S$. Let then $v \in \B^n$ be any word
such that  the factor $v[i,j]$ is a palindrome for each $(i,j) \in S$. Now, $i\eqv j$ implies
$v[i]=v[j]$. Consequently, $i\eqv^*j$ implies $u[i]=u[j]$ by transitivity. It follows that the
cardinality of $\B$ is at most the number of equivalence classes of $\eqv^*$.

If $S$ palindromically generates the given word $u$, then, by the second condition of
Definition~\ref{gen},  each equivalence class of~$\eqv^*$ corresponds to a different letter
in $\A$. Therefore $u[i]=u[j]$ if and only if $i\eqv^* j$. This proves the claim from (i) to~(ii).

Suppose then that $S$ satisfies (ii). First, let $I=(i,j) \in S$, and let $i \le k \le j$. Denote
$k'=\rho_I(i)$. By (ii), we have that $u[k]=u[k']$, and therefore $u[i,j] \in \P$. Hence the
first condition of Definition~\ref{gen} holds. As for the
second condition,  if $v \in \B^n$ is such that $v[i,j]$ is a
palindrome for each $(i,j) \in S$, then the  cardinality of $\B$ is at most the cardinality of $\A$.
By (ii), we have that $v[i]=v[j]$ implies $u[i]=u[j]$ which proves the claim.
\end{proof}

\begin{definition} Suppose $S \subseteq \S(n)$ palindromically generates a word $u\in \A^n,$
and let $m\in  \{1,2,\ldots ,n\}.$ We say $m$ is a \emph{leaf of} $S$ with label $u[m]\in \A$ 
if there exists at most one pair $I =(i,j)\in S$ for which $i \leq m \leq j$ and $\rho_I(m)\neq m.$
\end{definition}

\begin{lemma}\label{heritage} Let $u\in \A^+.$ Then $\mu(v)\leq \mu(u)$
for all factors $v$ of $u.$  \end{lemma}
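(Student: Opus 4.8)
The plan is to reduce the whole statement to the deletion of a single end letter, and then to \emph{normalise} the generating set so that this letter becomes a leaf, after which it can be removed essentially for free. Since every factor $v$ of $u$ is obtained by successively erasing the first or the last letter, and since $\mu(w)=\mu(w^R)$ (reflect each generator $(i,j)$ to $(|w|+1-j,\,|w|+1-i)$, which is a palindrome of $w^R$ exactly when $(i,j)$ is a palindrome of $w$, and the reachability relation of Lemma~\ref{crit} is transported accordingly), it suffices to prove $\mu(u[1,n-1])\le\mu(u)$ for $u\in\A^n$. Fix a set $S$ with $\card S=\mu(u)$ that palindromically generates $u$; if $\mu(u)=+\infty$ there is nothing to prove.

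The main obstacle, and the heart of the argument, is to replace $S$ by a palindromic generating set of $u$ of no larger size in which $n$ is a \emph{leaf}, i.e.\ at most one pair $(a,n)\in S$ has $a<n$. Suppose several pairs $(a_1,n),\dots,(a_m,n)$ with $a_1<\cdots<a_m$ end at $n$. I keep the longest one $(a_1,n)$ and, for each $t\ge 2$, replace $(a_t,n)$ by its mirror image across the centre of $u[a_1,n]$, namely $J_t:=\rho_{(a_1,n)}\bigl((a_t,n)\bigr)=(a_1,\,a_1+n-a_t)$. Because $u[a_1,n]\in\P$, the factor $u[J_t]$ is the reversal of the palindrome $u[a_t,n]$, hence is itself a palindrome, and its right endpoint $a_1+n-a_t$ is strictly less than $n$. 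The key input is the conjugation identity $\rho_{(a_t,n)}=\rho_{(a_1,n)}\,\rho_{J_t}\,\rho_{(a_1,n)}$ together with its inverse; the delicate point I would check is that these hold as compositions of \emph{partial} involutions, which is exactly why all three reflections must map the interval $[a_1,n]$ into itself. Granting this, every move of the new set is a composite of moves of $S$ and vice versa, so by Lemma~\ref{crit} the generated relation is unchanged, while the number of pairs ending at $n$ drops by one. Iterating over $t=2,\dots,m$ (always using the fixed axis $(a_1,n)$) turns $n$ into a leaf without increasing the size.

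Once $n$ is a leaf, the final step is routine. Let $(a_1,n)$ be the unique pair ending at $n$ (if there is none, $n$ is isolated and I simply discard any trivial $(n,n)$). I replace $(a_1,n)$ by its restriction $(a_1+1,n-1)$, omitting it when $a_1+1>n-1$ and freely discarding trivial pairs, and keep all other generators; the result $S'\subseteq\S(n-1)$ satisfies $\card S'\le\card S$. Every $S'$-move is an $S$-move, giving one inclusion of relations. For the reverse inclusion, note that the only reflection touching $n$ merely swaps $n$ with $a_1$, so any $S$-path between positions of $[1,n-1]$ can be rerouted to avoid $n$ using only the moves retained in $S'$ (the moves of the untouched generators, and the non-outer pairs of $(a_1,n)$, which are precisely the pairs of $(a_1+1,n-1)$). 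Thus $S'$ realises exactly the restriction to $[1,n-1]$ of the relation generated by $S$, which by Lemma~\ref{crit} is the equality relation of $u[1,n-1]$; hence $S'$ palindromically generates $u[1,n-1]$ and $\mu(u[1,n-1])\le\mu(u)$, completing the induction. I expect the leaf-normalisation of the second paragraph to be the only nontrivial step, and within it the partial-domain verification of the conjugation identity to be where the care is needed.
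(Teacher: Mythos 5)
Your proof is correct and follows essentially the same route as the paper's: reduce to deleting a single end letter, reflect all shorter generators anchored at that endpoint across the longest such generator so the endpoint becomes a leaf, then trim that unique generator to its interior pair $(a_1+1,n-1)$. The only differences are cosmetic (you delete the last letter where the paper deletes the first, and you spell out the conjugation identity $\rho_{(a_t,n)}=\rho_{(a_1,n)}\rho_{J_t}\rho_{(a_1,n)}$ and the path-rerouting argument that the paper leaves implicit).
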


\begin{proof}
The result is clear if either $\mu(u)=+\infty$ or $\mu(u)=0.$  So suppose $\emptyset \neq S\subseteq \S(|u|)$ palindromically
generates $u$ and set $k=\card S.$  It suffices to show that if $u=ax=yb$ for some words $x$
and $y$ and letters $a,b \in \A,$ then $\max\{\mu(x),\mu(y)\}\leq k.$  We prove only that
$\mu(x)\leq k$ as the proof that $\mu(y)\leq k$ is completely symmetric.

Write $S=\{I_1, I_2, \ldots, I_k \}.$ If no generator $I$ in $S$ is of the form $I=(1,m),$ then clearly the set $\{(i-1,j-1)\,|\, (i,j)\in S\}\subseteq \S(|x|)$ palindromically generates $x.$  Otherwise let $m \in \nats$ be
the largest integer such that $I=(1,m) \in S$. Let $D=\{r\in \{1,2,\ldots ,k\}\,|\,
I_r=(1,q)\,\,\mbox{with}\,\,q<m\}.$ Let \[S'=S \cup \{I'_r\,|\,r\in D\}\setminus \{I_r\,|\,r\in
D\}\] where for each $r\in D$ we set $I'_r=\rho_I(I_r)=(m-q+1,m)$; see Fig. \ref{Figure 1}.

\begin{figure}[ht]
\begin{center}
\setlength{\unitlength}{0.65mm}
\begin{picture}(105,24)(10,-4)
  \put(0,0){ \line(1,0){100}}
  \put(16,0.5){\oval(30,4)[t]}
  \put(16,5){$I_r$}
    \put(66,0.5){\oval(30,4)[t]}
  \put(62,5){$I'_r$}
  \put(41,-0.5){\oval(80,4)[b]}
   \put(41,-8.5){$I$}
   \put(105,-1.25){}
\end{picture}
\end{center}
\caption{Reflecting the generators in $S.$}\label{Figure 1}
\end{figure}
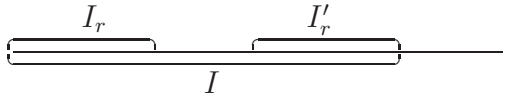

It follows that $S'$  also palindromically generates $u$ and $I$ is the only generator in $S'$
containing the initial position~$1.$ Whence $1$ is a leaf of\ $S', $ and hence  putting \[S''=S'
\cup \{(2,m-1)\}\setminus \{I\}\]  it follows that $\{(i-1,j-1)\,|\, (i,j)\in S''\}$ palindromically generates the suffix $x$ of $u.$  This
proves the claim.
\end{proof}

\noindent We now show that $\psi(x)\geq 3$ whenever $x$ is aperiodic. We will make use of the following refinement of Lemma~\ref{Lyndon}:

\begin{lemma}\label{needed} If $x\in \A^\nats$ is aperiodic, then $x$ contains arbitrarily long unbordered factors $u$ in which the first and last symbols of $u$ are not uni-occurrent in $u,$ i.e., each occurs at least twice in $u.$
\end{lemma}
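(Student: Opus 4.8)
The plan is to construct the required factors out of \emph{return words} to a single, well-chosen letter, thereby reducing unborderedness to a purely combinatorial condition on a derived sequence. First I would record a reduction: since $x$ is aperiodic it cannot be ultimately constant, so at least two letters occur infinitely often in $x$. Fix one such letter $a$ and let $p_1<p_2<\cdots$ be the positions at which $a$ occurs. Writing $r_k=x[p_k,p_{k+1}-1]=ag_k$ for the $k$-th return word (so that $a\notin g_k$), the tail of $x$ beginning at $p_1$ factors as $r_1r_2r_3\cdots$, and this infinite sequence $D$ over the (possibly infinite) alphabet $\mathcal R=\{r_k\}$ is again aperiodic, since a period of $D$ would force $x$ to be ultimately periodic.

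The key observation is that borders of a block of return words are already visible at the level of $D$. Indeed, in $R=r_ir_{i+1}\cdots r_j$ the letter $a$ occurs exactly at the left end of each factor $r_k$, so any border of $R$ must begin at one of these positions; matching up the occurrences of $a$ then forces a border to consist of an integral run of return words, and one checks that $R$ is unbordered if and only if the finite sequence $(r_i,\dots,r_j)$ is unbordered over $\mathcal R$. Applying Lemma~\ref{Lyndon} to $D$ (directly when $\mathcal R$ is finite; otherwise one argues as in its proof that $D$ still admits arbitrarily long unbordered blocks) yields arbitrarily long unbordered words $R=r_i\cdots r_j$ with $j>i$. Such an $R$ automatically has its \emph{first} symbol non-uni-occurrent: it begins with $a$, and the second return word $r_{i+1}$ contributes a further occurrence of $a$.

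The hard part will be the \emph{last} symbol. The final letter of $R$ is the last letter $c$ of $g_j$ (and $g_j\neq\emptyset$, for otherwise $R$ would begin and end with $a$ and hence be bordered), and there is no reason a priori for $c$ to reoccur in $R$. To force this I would arrange that the terminal return word $r_j$ already appears earlier in the block, so that its last letter is carried along; since each return word has one of only $\card\A$ possible final letters, a pigeonhole argument on these colours — applied to a sufficiently long unbordered block and combined with a secondary appeal to Lemma~\ref{Lyndon} for the induced colouring — should produce a block whose terminal letter recurs. In the complementary regime where the gaps $g_k$ grow without bound, one instead slides the right endpoint: deep inside a single long gap one locates two positions carrying the same letter and cuts at the later one, leaving a trailing segment longer than the preceding gap, which by the criterion above keeps the (two–return–word) block unbordered while making the final letter reoccur. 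Reconciling these two mechanisms — guaranteeing \emph{simultaneously} that the right endpoint recurs, that $R$ remains unbordered, and that $|R|\to\infty$ — is the main obstacle; the freedom in choosing the reference letter $a$ (for instance taking it to be the sparser of the two recurrent letters, which in the binary case makes every gap a block of the other letter and renders the last letter automatically non-uni-occurrent) is what I expect to make the argument go through.
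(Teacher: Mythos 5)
Your reduction to the derived sequence of return words breaks at the step you lean on most: the claimed equivalence ``$R=r_i\cdots r_j$ is unbordered if and only if $(r_i,\dots,r_j)$ is unbordered over $\mathcal{R}$'' fails in the direction you need. It is true that the \emph{suffix} occurrence of a border of $R$ must start at an occurrence of $a$, hence at a return-word boundary, so a border has the form $r_k\cdots r_j$. But its \emph{prefix} occurrence need not respect the boundaries of $R$: matching the positions of $a$ only forces $r_{k+t}=r_{i+t}$ for $t<j-k$ together with $r_j$ being a prefix of $r_{i+j-k}$, not equal to it. Concretely, take $a=0$ with consecutive return words $r_i=011$ and $r_j=01$: the sequence $(011,01)$ is unbordered over $\mathcal{R}$ since its two letters differ, yet $R=01101$ has the border $01$. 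So unbordered blocks of the derived sequence $D$ need not project to unbordered factors of $x$, and the appeal to Lemma~\ref{Lyndon} on $D$ does not deliver what you want. (There is also the secondary issue that $\mathcal{R}$ may be infinite, which you acknowledge, but the boundary-alignment failure is the real problem.)

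Second, even granting that reduction, your treatment of the last symbol is a list of candidate mechanisms rather than an argument (``should produce,'' ``I expect \dots to make the argument go through''), and you yourself identify reconciling them as the main obstacle; since that is essentially the whole content of the lemma, it cannot be left open. For comparison, the paper avoids return words entirely: pass to a suffix $x'$ in which every occurring letter occurs infinitely often, and split into two cases. If every letter of $x'$ is uniformly recurrent, every sufficiently long factor contains every letter at least twice, and Lemma~\ref{Lyndon} applied directly to $x'$ finishes. If some letter $a$ is not uniformly recurrent, arrange that $a$ is the first letter of $x'$ and let $u(n)$ be the shortest prefix of $x'$ ending in an $a$-free factor $U_n$ of length $n$; the uni-occurrence of $U_n$ in $u(n)$ together with $u(n)$ beginning in $a$ makes $u(n)$ unbordered for free, $|u(n)|\to\infty$ forces $a$ to occur twice eventually, and a pigeonhole on the final letters of the nested prefixes $u(n)$ handles the last symbol. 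Your ``gaps grow without bound'' scenario is exactly this second case, but anchoring at prefixes ending in a long $a$-free block removes both the unborderedness difficulty and the need for the derived-sequence machinery.
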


\begin{proof}Choose a suffix $x'$ of $x$ such that each $a\in \alf (x')$  occurs an infinite number of times in $x'.$ Then $x'$  is also aperiodic. First assume each $a\in \alf(x')$ is {\it uniformly recurrent} in $x',$ i.e., each $a\in \alf(x')$ occurs in $x'$ with bounded gap. It follows there exists a positive integer $N$ such that each $a\in \alf(x')$ occurs at least twice in each factor $u$ of $x'$ with $|u|\geq N.$ The result now follows immediately from  Lemma~\ref{Lyndon}, which guarantees that $x'$ contains infinitely many distinct  unbordered factors.  So now suppose some symbol $a\in \alf(x')$ is not uniformly recurrent in $x'.$ Short of replacing $x'$ by a suffix of $x',$ we can assume that $a$ is the first symbol of $x'.$ Then for each positive integer $n,$ pick a factor $U_n$ of $x'$ of length $n$ which does not contain an occurrence of $a.$ Let $u(n)$ denote the shortest prefix of $x'$ ending in $U_n.$ Since $a$ does not occur in $U_n$ and $U_n$ is uni-occurrent  in $u(n),$ it follows that each $u(n)$ is unbordered. Also, since $|u(n)|\geq n,$ it follows that $\limsup _{n\rightarrow +\infty} |u(n)|=+\infty.$ Thus, there exists $N$ such that each $u(n)$ with $n\geq N$ contains at least two occurrences of $a.$ Also, by the pigeonhole principle, there exists $b\in \alf(x')$ such that infinitely many $u(n)$ terminate in $b.$  Hence there exists infinitely many $n$ with $u(n)$ of the form $u(n)=avb$ where both $a$ and $b$ occur twice in $u(n).$  
\end{proof} 

\begin{proposition}\label{aperiodic2} If $x\in \A^\nats$ is aperiodic, then $\psi (x)\geq 3.$
\end{proposition}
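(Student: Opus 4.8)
The plan is to argue by contradiction: suppose $\psi(x)\le 2$, so that $\mu(u)\le 2$ for every factor $u$ of $x$. Using Lemma~\ref{needed} I would fix a factor $u=u[1,n]$ of $x$ that is unbordered, whose first and last symbols each occur at least twice in $u$, and whose length satisfies $n>2\,\card\alf(x)$; this is possible since Lemma~\ref{needed} supplies such factors of arbitrary length while $\card\alf(x)$ is finite. I will then show directly that no $S\subseteq\S(n)$ with $\card S\le 2$ can palindromically generate $u$, contradicting $\mu(u)\le 2$.

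The first step is to pin down the shape of a hypothetical generating set $S$. Since a nontrivial reflection $\rho_I$ moves position $1$ only when $I=(1,m)$, the requirement that $u[1]$ occur at a second position forces, via the path criterion of Lemma~\ref{crit}, some generator of the form $(1,b)$ with $b\ge 2$; symmetrically, $u[n]$ occurring twice forces a generator $(a,n)$ with $a\le n-1$. Because $u$ is unbordered and $|u|\ge 2$, it is not a palindrome (a palindrome of length $\ge 2$ has its first letter as a border), so $(1,n)\notin S$; hence these two generators are distinct and $S=\{(1,b),(a,n)\}$ with $1<a$ and $b<n$. In particular the case $\card S=1$ is excluded, and it remains to rule out $\card S=2$ according to whether the palindromic prefix $u[1,b]$ and palindromic suffix $u[a,n]$ are disjoint or overlap.

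If the intervals are disjoint, i.e.\ $b<a$, then under the group generated by $\rho_{(1,b)}$ and $\rho_{(a,n)}$ the orbit of each position of $[1,b]$ stays in $[1,b]$ and the orbit of each position of $[a,n]$ stays in $[a,n]$, while the positions in the gap $[b+1,a-1]$ are fixed. Counting equivalence classes (each symmetric pair of the two palindromes together with the singletons in the gap, which are pairwise disjoint in the disjoint case), I would obtain $\card\alf(u)\ge\lceil n/2\rceil$, which contradicts $n>2\,\card\alf(x)$. This is where the finiteness of the alphabet is essential; indeed, over an unbounded alphabet such disjoint configurations do occur (e.g.\ $u=abbacdeffe$), so the length bound cannot be dropped.

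The overlapping case $a\le b$ is the main obstacle and the heart of the argument. Here $\rho_{(a,n)}\circ\rho_{(1,b)}$ acts as the translation $k\mapsto k+\delta$ with $\delta=a+n-1-b$ on the region where both reflections apply. Combining the two palindrome relations $u[k]=u[1+b-k]$ and $u[k']=u[a+n-k']$, I would show that $u[k]=u[k+\delta]$ for every $k\in[1,\,b+1-a]$; setting $m=b+1-a\ge 1$ this says precisely that $u[1,m]=u[n-m+1,n]$, using the bookkeeping $\delta+m=n$. Since $1\le m<n$, the prefix $u[1,m]$ is then a proper border of $u$, contradicting that $u$ is unbordered. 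Both cases being impossible, $S$ cannot exist, so $\mu(u)\ge 3$ and hence $\psi(x)\ge 3$. The points needing careful verification are the exhaustiveness of the reduction to $S=\{(1,b),(a,n)\}$ and the index bookkeeping that identifies the border in the overlapping case.
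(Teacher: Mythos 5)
Your proof is correct and follows essentially the same route as the paper: extract via Lemma~\ref{needed} a sufficiently long unbordered factor whose first and last letters are not uni-occurrent, force the generating set to be $\{(1,b),(a,n)\}$, rule out overlap of the two palindromes by unborderedness, and derive a contradiction from counting equivalence classes in the disjoint case. You merely supply explicitly two steps the paper leaves implicit (the translation-gives-a-border computation and the $\lceil n/2\rceil$ class count), so no further comparison is needed.
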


\begin{proof}
Let $r=\card\alf(x).$  By Lemma~\ref{needed}, $x$ contains an unbordered factor $w$ of length $|w|\geq
2r+1$ where the first and last symbols of $w$ each occurs at least twice in $w.$  We claim that $\mu(w)\geq 3.$ Suppose to the contrary that $\mu(w)\leq 2.$ If $w$ is palindromically generated by a singleton set $\{I\}\subseteq \S(|w|),$ then
$w$ contains at least $r+1$ distinct symbols, a contradiction. Otherwise, since the first and last symbols of $w$ are not uni-occurrent in $w,$ we deduce that $w$ is palindromically
generated by a set  of the form $\{(1,p),(q,|w|)\}\subseteq \S(|w|).$   As $w$ is
unbordered, it follows that the palindromic prefix $w[1,p]$ does not overlap the
palindromic suffix $w[q,|w|]$  (i.e., $p<q).$ It follows again that $w$ must have at least
$r+1$ distinct symbols, a contradiction. Thus $\mu(w)\geq 3$ and hence $\psi(x)\geq 3.$
\end{proof}

\noindent  Let $\TM\in \{0,1\}^\nats$ denote the  {\it Thue-Morse} word
\[
\TM =0110100110010110100101100110100110010110\ldots
\]

\begin{proposition}\label{TM} For each integer $n>0$ there  exists a factor $u$ of
$\TM$ with $\mu(u)\geq n,$ i.e., $\psi(\TM)=+\infty.$
\end{proposition}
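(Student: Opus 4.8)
The plan is to prove the equivalent statement that for an overlap-free word, and in particular for $\TM$ by Thue's theorem, a bound $\mu(u)\leq M$ forces a bound $|u|\leq f(M)$; since $\TM$ has factors of every length, this immediately gives $\psi(\TM)=+\infty$. The only property of $\TM$ I intend to use is overlap-freeness: $\TM$ contains no factor of the form $avava$ with $a$ a letter, so \emph{every factor of $\TM$ with period $p$ has length at most $2p$}. Fix a factor $u$ of length $n$ containing both letters (any factor of length $\geq 2$ does, since $\TM$ has no factor $aaa$), and let $S$ be a generating set with $\card S=k=\mu(u)$. By Lemma~\ref{crit}, the reflection-orbit relation coincides with the relation $u[i]=u[j]$; equivalently, the graph $G$ on $\{1,\dots,n\}$ whose edges are the pairs $\{m,\rho_I(m)\}$ with $I\in S$ and $\rho_I(m)\neq m$ has \emph{exactly two} connected components, namely the two colour classes. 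The task is thus to bound $n$ in terms of $k$, using the ordered structure of the reflections together with overlap-freeness.

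The first reduction is to isolate a long region on which the generating set acts homogeneously. The $2k$ endpoints of the intervals in $S$ cut $[1,n]$ into at most $2k+1$ blocks; let $B=[a,b]$ be a longest one, so that $|B|\geq (n-2k)/(2k+1)$, which tends to $\infty$ with $n$ for fixed $k$. No generator has an endpoint in the interior of $B$, so there is a fixed subset $T\subseteq S$ consisting of exactly those $I=(i,j)$ with $[a,b]\subseteq[i,j]$, and each such $I$ reflects all of $B$ about its centre $c_I=(i+j)/2$.

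Next I would exploit that a composition of two reflections is a translation: $\rho_{I'}\circ\rho_I$ sends $x$ to $x+2(c_{I'}-c_I)$ wherever both are defined. Because each colour class is connected, any two same-coloured positions of $B$ are joined by a path in $G$; an even-length path realises a translation lying in the subgroup $L\subseteq\mathbb{Z}$ generated by the numbers $2(c_{I'}-c_I)$ with $I,I'\in T$, whereas an odd-length path realises a reflection carrying $B$ off itself. Writing $g$ for the positive generator of $L$, the connectivity of each colour class forces each colour to occupy essentially a single residue class modulo $g$ within $B$; since $B$ meets every residue once $|B|\geq g$, this is impossible for $g\geq 3$. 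Hence $g\leq 2$, and then a suitable composition of reflections from $T$ is a translation by at most $2$ that is defined on a subinterval of $B$ of length comparable to $|B|$. This produces a factor of $\TM$ of period at most $2$ and length exceeding $4$, i.e.\ an overlap, contradicting overlap-freeness once $n$ is large. Consequently $|B|$, and therefore $n$, is bounded in terms of $k$, which is the desired conclusion.

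The main obstacle is the bookkeeping around the \emph{domains} of the composed reflections. An element of $L$ is a priori only an integer combination of several maps $\rho_{I'}\circ\rho_I$, so one must verify that a \emph{single} composition realising a small translation is defined on a long enough subinterval of $B$ to manufacture the forbidden overlap, rather than only on a short stretch near the block boundary. Controlling these boundary effects, handling the odd-length reflection-type paths, and pinning down the exact relation between being connected inside $B$ and being congruent modulo $g$ constitute the technical core; the remaining steps are the counting described above.
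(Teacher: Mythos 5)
Your reduction to a long block $B$ and your use of Lemma~\ref{crit} (the two colour classes are exactly the two connected components of the reflection graph $G$) are sound, but the argument then only ever proves one of the two implications it needs, and the missing one is the whole content of the proof. What your residue analysis gives is the forward direction: if $x,y\in B$ carry the same letter, then $y\equiv x$ or $y\equiv s-x\pmod g$, where $s$ is the common residue of the doubled centres. (Even this needs repair: a path joining two positions of $B$ may leave $B$ and use generators of $S$ disjoint from the interior of $B$, so $L$ must be generated by the differences $2(c_{I'}-c_I)$ over \emph{all} of $S$, not just $T$; and because each colour class may occupy the \emph{pair} of residues $\{r,s-r\}$, the immediate conclusion is only $g\le 4$, not $g\le 2$ --- one can push to ``either the two residue pairs are disjoint, forcing the colouring on $B$ to be literally periodic mod $g$, or they coincide, forcing $g\le 2$,'' but you do not make this dichotomy.) The fatal problem is the converse direction in the surviving cases $g\le 2$: to manufacture an overlap you need $u[x]=u[x+g]$ for a long run of consecutive $x$, which by Lemma~\ref{crit} means $x$ and $x+g$ must be \emph{connected} in $G$. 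Membership of $g$ in the abstract group $L$ provides no such path: $g$ is merely an integer combination of centre-differences, and the partial maps $\rho_{I'}\rho_I$ realizing those differences need not compose on any interval at all, since the intermediate images must land inside the relevant generators. Moreover, in exactly these cases the congruence information is empty --- for $g=1$ the constraint is vacuous, and for $g=2$ with $s$ odd both colour classes may meet both residues --- so no contradiction with overlap-freeness can be extracted from residues alone; genuine path-construction is unavoidable. You flag this yourself as ``the main obstacle'' and ``the technical core,'' and that self-assessment is accurate: the proposal is a plan whose decisive step is missing, not a proof.

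For comparison, the paper never has to convert algebra into connectivity, because it exploits the palindromic recursion of $\TM$ instead of a generic counting argument. Setting $t_k=\tau^k(0)$, one has $t_{2k}=t_{2k-1}t_{2k-1}^R$, and overlap-freeness forces every palindromic factor of $t_{2k}$ to lie entirely in one half or to be centred at the midpoint. Any minimal generating set of $t_{2k}$ must contain a generator crossing the midpoint (to link occurrences of $0$ in the two halves); that generator can be replaced by the full palindrome $F=(1,2^{2k})$, and every generator lying in the second half can be folded into the first half via $\rho_F$, using $\rho_I=\rho_F\rho_{I'}\rho_F$. The result is a generating set of the same cardinality consisting of $F$ together with generators inside the first half, and deleting $F$ leaves a generating set of $t_{2k-1}$. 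Hence $\mu(t_{2k})\ge\mu(t_{2k-1})+1\ge\mu(t_{2k-2})+1$ by Lemma~\ref{heritage}, and unboundedness follows by induction. If you want to salvage your more general route --- which, if completed, would apply to any binary overlap-free word rather than to $\TM$ alone --- the ingredient you must supply is exactly what this folding trick replaces: a mechanism showing that the small translation $g\in L$ is realized by an actual composition of reflections defined on a subinterval of $B$ of length at least $2g+1$.
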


\begin{proof} Let $\tau\colon \{0,1\}^* \to \{0,1\}^*$ be the morphism  fixing $\TM$ given by $\tau(0)=01$ and
$\tau(1)=10.$  We will use the well known fact that $\TM$ is {\it overlap free}, i.e.,  $\TM$  has no factors of the form $vzvzv$ where
$v$ and $z$ are nonempty words (see e.g.~\cite{Lothaire}). Set 
 $t_k=\tau^k(0)$ for $k \geq 0.$ We will show that $\mu(t_{2k})>\mu(t_{2k-2})$ for each
$k>1$ from which it follows immediately that $\psi(\TM)=+\infty.$ For each $k\geq 0$ there
exists $S_{2k}\subseteq \S(2^{2k})$ which palindromically generates $t_{2k}$  and
$\mu(t_{2k}) = \card S_{2k}$. We first observe that the prefix $t_{2k}$ of $\TM$ of length
$2^{2k}$ is a palindrome, since $t_{2k}=t_{2k-1}t^R_{2k-1}$. Since $\TM$ is
overlap-free,
it follows that if $v$ is a palindromic factor of $t_{2k}$, either $v$ lies
completely in the prefix $t_{2k-1}$ or completely in the suffix $t^R_{2k-1}$, or its midpoint
is the midpoint of $t_{2k}$.  There exists a generator in $S_{2k}$ that contains
the midpoint of $t_{2k}$ in order to identify an occurrence of $0$ in the prefix
$t_{2k-1}$ with an occurrence of $0$ in the suffix $t^R_{2k-1}$ of $t_{2k}$. Such a
generator can always be replaced by the pair $F=(1,2^{2k})$
corresponding to the full palindrome $t_{2k}$, and thus without
loss of generality we can assume that $F \in S_{2k}$.

If $I = (i,j) \in S_{2k}$ lies in the suffix  $t^R_{2k-1}$ of $t_{2k}$, i.e., if $i > 2^{2k-1},$
then we replace $I$ by its reflection $I'=\rho_F(I)$ which lies entirely in the first half of
$t_{2k}$. Since $\rho_I=\rho_F\rho_{I'}\rho_F$ on the domain of $\rho_I$, the set $(S_{2k}
\setminus \{I\}) \cup \{I'\}$ generates $t_{2k}$. In this fashion we obtain a palindromically generating set
$S'_{2k}$ consisting of $F$ and  pairs $(i,j)$ where $j \leq 2^{2k-1}$. Thus $S'_{2k}
\setminus \{F\}$ palindromically generates the prefix $t_{2k-1}$ of  $t_{2k}.$
 Since
$t_{2k-2}$ is a factor of $t_{2k-1},$ it follows from Lemma~\ref{heritage} that
$\mu(t_{2k})>\mu(t_{2k-1})\geq \mu(t_{2k-2})$ as required. \end{proof}

\noindent The following lemmas will be used in the subsequent section:

\begin{lemma}\label{double}
Suppose $u \in \A^n$ is palindromically generated by a set $S\subseteq \S(n).$ Suppose
further that there exist $p,q \in \nats$ such that $(p,p+2q) \in S.$ Then for $A=\{u[p+q]\}$
we have $\mu(d_{A}(u))\leq \card S.$ \end{lemma}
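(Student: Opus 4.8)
The plan is to build an explicit palindromic generating set $S'$ for $d_A(u)$ with $\card S'\le\card S$ by transporting each generator of $S$ through the doubling morphism, and then to verify the criterion of Lemma~\ref{crit}. Write $a=u[p+q]$, so that $A=\{a\}$ and $d_A$ doubles exactly the occurrences of $a$. For $0\le k\le n$ set $L(k)=k+\card\{\,l\le k\mid u[l]=a\,\}$, so that in $d_A(u)$ the letter $u[k]$ contributes the \emph{block} of positions $L(k-1)+1,\ldots,L(k)$ (a single position when $u[k]\ne a$, a pair when $u[k]=a$); let $\pi$ denote the resulting projection sending a position of $d_A(u)$ to its block index in $\{1,\dots,n\}$, and note $d_A(u)[s]=u[\pi(s)]$ for every $s$. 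A short computation shows that $d_A$ carries palindromes to palindromes (each letter-block $d_A(u[k])$ is itself a palindrome, and reversing $u[i,j]$ permutes the blocks symmetrically). Hence for $I=(i,j)\in S$ the factor $d_A(u)[L(i-1)+1,\,L(j)]$, being the image $d_A(u[i,j])$ of a palindrome, is again a palindrome, and I set $I'=(L(i-1)+1,L(j))$ and $S'=\{I'\mid I\in S\}$. Clearly $\card S'\le\card S$ and every element of $S'$ is a palindromic factor of $d_A(u)$. Below I write $s\sim_{S'}t$ when $t$ is reachable from $s$ by a path of reflections from $S'$ as in \eqref{EQV}, and similarly $\sim_S$.

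The key structural fact I will prove is that each reflection respects blocks: for $I=(i,j)\in S$ and $i\le k\le j$, the reflection $\rho_{I'}$ maps the block of $k$ onto the block of $\rho_I(k)=i+j-k$, reversing it. This reduces to the length identity $L(j)-L(k)=L(i+j-k-1)-L(i-1)$, which holds because $u[i,j]$ being a palindrome makes $u[k+1,j]$ the reversal of $u[i,i+j-k-1]$, and $d_A$ preserves lengths under reversal. Granting this, $\pi$ intertwines the two reflection systems, i.e. $\pi(\rho_{I'}(s))=\rho_I(\pi(s))$; consequently any $S'$-path projects to an $S$-path, so $s\sim_{S'}t$ forces $\pi(s)\sim_S\pi(t)$, hence $u[\pi(s)]=u[\pi(t)]$ since $S$ generates $u$, hence $d_A(u)[s]=d_A(u)[t]$. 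This gives the easy implication of the criterion in Lemma~\ref{crit}.

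The crux is the converse implication, and this is where the hypothesis $(p,p+2q)\in S$ is used. Here I first show that the two positions of every doubled block are $S'$-equivalent. For the central block $p+q$ this is immediate: the generator $(p,p+2q)$ spans the odd-length palindrome $u[p,p+2q]$ whose image under $d_A$ has even length, and its reflection $\rho_{(L(p-1)+1,\,L(p+2q))}$ swaps the two positions $L(p+q-1)+1$ and $L(p+q)$ of the block of $p+q$ (a direct length computation, using that the two halves of the palindrome carry equally many $a$'s). For an arbitrary $k$ with $u[k]=a$ we have $u[k]=a=u[p+q]$, so $k\sim_S p+q$ and there is an $S$-path $g$ from $p+q$ to $k$; lifting $g$ through the block-respecting property sends the pair of positions of block $p+q$ bijectively onto the pair of positions of block $k$, and since those two source positions are already $S'$-equivalent and each is $S'$-connected to its image under $g$, transitivity makes the two positions of block $k$ equivalent as well. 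Finally, for $s<t$ with $d_A(u)[s]=d_A(u)[t]$ we get $u[\pi(s)]=u[\pi(t)]$, so $\pi(s)\sim_S\pi(t)$; lifting this $S$-path sends $s$ into the block of $\pi(t)$, landing either on $t$ or on the other position of that block, which in the doubled case is $S'$-equivalent to $t$ by the claim just proved. Either way $s\sim_{S'}t$, completing the converse.

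Combining the two implications, Lemma~\ref{crit} shows $S'$ palindromically generates $d_A(u)$, whence $\mu(d_A(u))\le\card S'\le\card S$. I expect the main obstacle to be the propagation step---promoting the single swap supplied by $(p,p+2q)$ at the central block to the equivalence of both copies in \emph{every} doubled block---together with the bookkeeping needed to check that every lifted $S$-path is actually a legitimate $S'$-path (each reflection applicable at the current position, which holds since $i_t\le\pi(s')\le j_t$ forces $L(i_t-1)+1\le s'\le L(j_t)$); both hinge on the block-respecting identity, which is therefore the technical linchpin of the argument.
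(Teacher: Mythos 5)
Your proof is correct and follows essentially the same route as the paper's: dilate each generator $(i,j)$ to $(L(i-1)+1,L(j))$, verify that reflections commute with the block structure induced by $d_a$, use the even-length image of the odd palindrome $u[p,p+2q]$ to make the two copies of the central doubled letter reflect to each other, and propagate that identification to every doubled block along $S$-paths. Your write-up is in fact somewhat more explicit than the paper's about verifying both directions of the criterion in Lemma~\ref{crit}, but the underlying argument is the same.
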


\begin{proof}
Let $a=u[p+q]$, and write $d_a$ for $d_{\{a\}}$. We define first a mapping $p_{d_a}\colon
\{1,2, \ldots, n\} \to \nats \cup \{ (j,j+1) \mid  j \in \nats \}$ for the positions of $u\in \A^n$. It
maps every position of $u$ to a corresponding position of $d_a(u)$ or to a pair of positions
of $d_a(u)$ depending on whether the position of $u$ has a label $a$ that will be doubled
or some other label. For convenience let us denote the prefix $u[1,i]$ of $u$ by $u_i$ and
let $u_0$ be the empty word.
\[
p_{d_a}(i)=\begin{cases}
|d_a(u_i)| & \text{ if } u[i] \ne a\\
(|d_a(u_i)|-1, |d_a(u_i)|) & \text{ if } u[i] = a.\end{cases}
\]
Let $S'=\{(i',j')\,|\, (i,j)\in S\}$ where $i' = |d_a(u_{i-1})|+1$ and $j'=|d_a(u_j)|$. In other
words, we dilate each generator $(i,j)$ by applying $d_a$ to the corresponding factor
$u[i,j].$ Clearly $\card S'=\card S.$ We shall show that $S'$ palindromically generates $d_a(u).$ We
claim first that if a position $i_1$ of $u$ is reflected to $i_2$ by a generator $(i_S,j_S)\in S$
then $p_{d_a}(i_1)$ is reflected to $p_{d_a}(i_2)$ by a generator $(i_{S'},j_{S'})\in S'$.
Here $(i_{S'},j_{S'})$ is the generator in $S'$ corresponding to the generator $(i_S,j_S)$ in
$S$. Now $U = u[i_S,i_1-1] = u[i_2+1,j_S]^R$, and thus $|d_a(U)| = |d_a(U^R)| =
|d_a(u[i_2+1,j_S])|$; see Figure~\ref{Figure 2}. So if $u[i_1] \ne a$ then
$p_{d_a}(i_1)-i_{S'} = j_{S'}- p_{d_a}(i_2)$ and otherwise $p_{d_a}(i_1)-(i_{S'}, i_{S'}+1) =
(j_{S'}-1,j_{S'})- p_{d_a}(i_2)$. Thus $p_{d_a}(i_1)$ is reflected to $p_{d_a}(i_2)$ as a
single or as a pair of positions.

\begin{figure}[ht]
\begin{center}
\setlength{\unitlength}{0.65mm}
\begin{picture}(200,26)(10,-12)
  \put(-5,-1.25){$u$}
    \put(0,0){ \line(1,0){80}}
  \put(10,-1){\line(0,1){2}}
    \put(8,-6){$i_S$}
    \put(10,2){\line(0,1){2}}
    \put(10,3){\line(1,0){14}}
    \put(24,2){\line(0,1){2}}
  \put(16,4){$U$}
  \put(25,0){\circle*{2}}
    \put(23,-6){$i_1$}
  \put(55,0){\circle*{2}}
    \put(53,-6){$i_2$}
    \put(56,2){\line(0,1){2}}
    \put(56,3){\line(1,0){14}}
    \put(70,2){\line(0,1){2}}
    \put(61,4){$U^R$}
    \put(70,-1){\line(0,1){2}}
    \put(68,-6){$j_S$}

    \put(90,-1.25){$d_a(u)$}
    \put(105,0){ \line(1,0){120}}
  \put(120,-1){\line(0,1){2}}
    \put(118,-7){$i_S'$}
    \put(132,3){\oval(24,3)[t]}
    \put(132,6){\oval(24,3)[b]}
  \put(122,8){$|d_a(U)|$}
  \put(145,0){\circle*{2}}
    \put(136,-7){$p_{d_a}(i_1)$}
  \put(185,0){\circle*{2}}
    \put(176,-7){$p_{d_a}(i_2)$}
    \put(198,3){\oval(24,3)[t]}
    \put(198,6){\oval(24,3)[b]}
  \put(188,8){$|d_a(U^R)|$}
    \put(210,-1){\line(0,1){2}}
    \put(208,-7){$j_S'$}
\end{picture}
\end{center}
\caption{Reflection complies with doubling morphism $d_a$.}\label{Figure 2}
\end{figure}
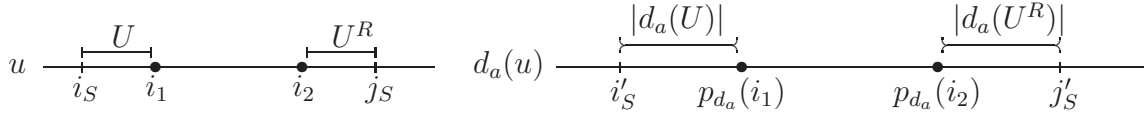

Let us denote, for any $x \in \A$,
$$\Omega_{u,x}=\{ i \mid 1\leq i \leq n \text{ and } u[i]=x \},$$
\noindent i.e., $\Omega_{u,x}$ is the set of occurrences of $x$ in $u.$ By
Lemma~\ref{crit}, for each $i,j \in \Omega_{u,x}$ there exists a sequence $i=i_1,i_2,
\ldots, i_l=j$  of positions $i_m \in \Omega_{u,x}$ such that $i_m$ is reflected to $i_{m+1}$
by some generator in~$S$. As we just showed there also exists a sequence
$i'=p_{d_a}(i_1),p_{d_a}(i_2), \ldots, p_{d_a}(i_l)=j'$ such that $p_{d_a}(i_m)$ is reflected
to $p_{d_a}(i_{m+1})$ by some generator in $S'$. Thus if $i \in \Omega_{u,x}$ then
$p_{d_a}(i) \in \hat{\Omega}_{d_a(u),x}$ where
$$\hat{\Omega}_{d_a(u),x} =\{ p_{d_a}(i) \mid i \in \Omega_{u,x}  \}.$$
\noindent In fact, if $x \ne a$ then $\hat{\Omega}_{d_a(u),x}$ is the same set as
$\Omega_{d_a(u),x}$. The only problematic set is $\hat{\Omega}_{d_a(u),a} = \{ (i', i'+1) \mid
1\leq i' < |d_a(u)| \text{ and for which } \exists i \in \Omega_{u,a} \text{ s.t. }
p_{d_a}(i)=(i', i'+1) \}$.

Now, consider the generator $(p',q') \in S'$ that is obtained from $(p,p+2q) \in S$, i.e. $(p',q')=
(|d_a(u_{p-1})|+1, |d_a(u_{p+2q})|)$. The length of the palindrome determined by the
generator $(p',q')$ is even because $|d_a(u[p,p+q-1])| = |d_a(u[p+q+1, p+2q])|$ and
$|d_a(u[p+q])|=2$. Hence a pair of two consecutive positions of $d_a(u)$, namely the
positions of $p_{d_a}(p+q) \in \hat{\Omega}_{d_a(u),a}$, is now in the middle of this
palindrome $(p',q')$ thus these two positions reflect to each other and they have to have
the same letter in word $d_a(u)$. Because of the pairwise reflections among the set
$\hat{\Omega}_{d_a(u),a}$, the positions of u according to the pairs in this set contain the
same letter $a$.

So we have that $\hat{\Omega}_{d_a(u),x} = \Omega_{d_a(u),x}$ for $x \ne a$. In the case
$x=a$ the positions covered by the pairs in $\hat{\Omega}_{d_a(u),a}$ are exactly the
positions in $\Omega_{d_a(u),a}$. This shows that there exists a path as described in
Lemma~\ref{crit} between each positions in $\Omega_{d_a(u),x}$ for any $x \in \A$ thus
$S'$ palindromically generates the doubled word $d_a(u)$ and this ends the proof.
\end{proof}

\begin{corollary}\label{dcor}
Let $a\in \A$ and  $u \in \A^n.$  Then for $A=\{a\}$ we have $\mu(d_{A}(u))\leq \mu(u)+1.$
\end{corollary}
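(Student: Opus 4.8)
The plan is to deduce the corollary directly from Lemma~\ref{double}, by adjoining to an optimal generating set a single trivial palindromic generator of length one centered at an occurrence of $a$.

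First I would dispose of the degenerate cases. If $a\notin\alf(u)$, then $d_{\{a\}}(u)=u$ and the bound is immediate; and if $u$ is not palindromically generated, then $\mu(u)=+\infty$ and there is nothing to prove. So assume $a\in\alf(u)$ and fix a set $S\subseteq\S(n)$ that palindromically generates $u$ with $\card S=\mu(u)<+\infty$.

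Next, choose a position $m$ with $u[m]=a$ and set $S^*=S\cup\{(m,m)\}$. I would verify that $S^*$ still palindromically generates $u$: the factor $u[m,m]$ is a single letter, hence a palindrome, so all factors indexed by $S^*$ are palindromes, and since $S\subseteq S^*$, any word $v$ whose factors indexed by $S^*$ are palindromes \emph{a fortiori} has palindromic factors at every position of $S$, whence the generating property of $S$ supplies the morphism onto $v$ demanded by Definition~\ref{gen}. Thus $S^*$ palindromically generates $u$, and $\card S^*\leq\mu(u)+1$, with equality unless $(m,m)$ already belongs to $S$.

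Finally, the pair $(m,m)$ has the form $(p,p+2q)$ with $p=m$ and $q=0$, and its midpoint satisfies $u[p+q]=u[m]=a$. Applying Lemma~\ref{double} to $S^*$ with this pair and $A=\{a\}$ yields $\mu(d_{\{a\}}(u))\leq\card S^*\leq\mu(u)+1$, as required. The one point that needs care is that Lemma~\ref{double} be invoked in the degenerate case $q=0$: there the dilated image of $(m,m)$ is a palindrome of length two in $d_{\{a\}}(u)$ whose two consecutive positions are precisely $p_{d_a}(m)$, so the key mechanism in the proof of Lemma~\ref{double}---that the two doubled copies of $a$ occupy the middle of an even-length palindromic generator and therefore reflect onto one another---goes through unchanged. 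I expect this degenerate-case verification to be the only minor obstacle; everything else is a direct appeal to Lemma~\ref{double}.
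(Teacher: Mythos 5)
Your proposal is correct and follows essentially the same route as the paper: both reduce to Lemma~\ref{double} by adjoining a trivial generator $(m,m)$ at an occurrence of $a$, viewed as the degenerate case $q=0$ of a pair $(p,p+2q)$. The only cosmetic difference is that the paper first notes that no extra generator is needed when $S$ already contains an odd-length palindromic generator centered at $a$ (yielding the sharper bound $\mu(u)$ in that case), whereas you always add the trivial generator, which still gives the stated bound.
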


\begin{proof}
The result is clear if the symbol $a$ does not occur in $u$ since in this case $d_{A}(u)=u.$
Thus we can assume that $a$ occurs in $u.$ Suppose $S\subseteq \S(n)$ palindromically
generates $u$ and $\mu(u)=\card S.$ If $S$ contains a generator that determines a palindrome
of odd length whose center is equal to $a,$  then  by Lemma~\ref{double} we deduce that
$\mu(d_{A}(u))\leq \mu(u)<\mu(u)+1.$   If no such  generator exists, then we can add a
"trivial" generator $(i,i)$ where $i$ is such that $u[i]=a$. Now $S \cup \{(i,i) \}$ has
$\mu(u)+1$ elements and the result follows again from Lemma~\ref{double}.
\end{proof}

\begin{example}\normalfont{
Consider a palindrome $u=aba$ which can be generated by one palindromic generator
(1,3). It is readily checked that the doubled word $d_{\{a\}}(u) = aabaa$ is generated by the set $\{(1,5),
(1,2)\}$ and $\mu(d_{\{a\}}(u)) = \mu(u)+1.$ If we instead double the letter $b$ then we do
not need any additional generators, i.e., the doubled word $d_{\{b\}}(u) = abba$ is
generated by $(1,4)$ and $\mu(d_{\{b\}}(u)) = \mu(u).$}
\end{example}

\begin{lemma}\label{leaves}
Let  $w \in \{0,1\}^n$ be an unbordered word of length $n$ in which both $0$ and $1$ are not uni-occurrent. Suppose further that $w$ is  palindromically
generated by a set $S\subseteq \S(n)$ with $\card S\leq 3.$ 
Then $S$ contains at most two leaves with label $0$ and at most two leaves with label
$1$.
\end{lemma}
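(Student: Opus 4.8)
The plan is to recast the statement graph-theoretically and then bound the number of leaves in each letter-class.

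\medskip
\noindent\emph{Step 1 (the reflection graph).} Following Lemma~\ref{crit}, I form the graph $G$ on the vertex set $\{1,\dots,n\}$ whose edges are the pairs $\{m,\rho_I(m)\}$ with $I\in S$ and $\rho_I(m)\neq m$. By Lemma~\ref{crit}, two positions lie in the same component of $G$ if and only if they carry the same letter, so $G$ has exactly two components $C_0$ and $C_1$, one per letter. Because neither $0$ nor $1$ is uni-occurrent, each $C_\alpha$ has at least two vertices and $G$ has no isolated vertex (an isolated vertex would be a singleton class, forcing its letter to be uni-occurrent). A position $m$ is a leaf precisely when it is moved by exactly one generator---it cannot be moved by none, as that would isolate it---and then $m$ has degree one in $G$, its unique neighbour being $\rho_I(m)$ for the single generator $I$ moving it. Since the two components are the two letter-classes, it suffices to show that each component contains at most two leaves.

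\medskip
\noindent\emph{Step 2 (local structure of leaves).} Each generator $I=(i,j)$ contributes to $G$ the nested, non-crossing matching $i\leftrightarrow j,\ i{+}1\leftrightarrow j{-}1,\dots$ on its interval; write $M_I$ for its moved set. For a leaf $x$ moved by $I$, its neighbour $y=\rho_I(x)$ falls into one of two cases: either $y$ is also moved only by $I$, whence $\{x,y\}$ is an \emph{entire} component of size two carrying two leaves of one label; or $y$ is moved by a further generator, so $\deg y\ge 2$ and $x$ is a pendant attached to $y$. Thus any component that is not a single edge carries only pendant leaves, and the task reduces to bounding pendants. The governing observation is a nesting principle: a leaf lies in the exposed overhang $M_I\setminus\bigcup_{I'\neq I}M_{I'}$ of some interval, and if $I$ sits inside another interval $I'$ then reflecting a point of $I$ by $\rho_I$ keeps it inside $I'$, so its image is again doubly covered. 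Hence leaves can occur only at the outer ends of the interval arrangement, and with only three intervals the overhangs are few.

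\medskip
\noindent\emph{Step 3 (global constraints; the obstacle).} To finish I would rule out a third leaf in a single component by invoking the two global hypotheses. Unborderedness enters through the identity $\rho_{I'}\rho_I(x)=x+\bigl((i'+j')-(i+j)\bigr)$: the composition of two reflections is a translation by twice the distance between the two centres, so whenever two palindromic intervals overlap this translation is a period of their common part, and once the overlap is long relative to the period it propagates (as in Proposition~\ref{aperiodic2}) to a border of $w$, which is excluded. Binarity enters through the demand that $G$ have \emph{exactly} two components: the configurations that would create a third pendant in one class (for instance a degree-three vertex carrying three leaves, or several disjoint exposed pairs) force the remaining positions to split off into additional components, contradicting that there are only two. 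The main obstacle is exactly this last step---organising the finitely many relative placements of the at most three intervals and checking, in each, that a third leaf of a given label either manufactures a border or a third component. The leverage is that the two constraints are complementary: unborderedness kills the long overlaps, while the two-component condition kills the short, fragmenting ones, so together they eliminate every configuration with three leaves of the same label.
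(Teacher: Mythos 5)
Your Step~1 sets up exactly the right framework (the reflection graph, its two components being the two letter classes, and the identification of leaves with degree-one vertices), and this matches the spirit of the paper's argument via Lemma~\ref{crit}. But the proof is not complete: in Step~3 you explicitly leave the decisive step as an ``obstacle,'' proposing an unexecuted case analysis over the relative placements of the three intervals, and the auxiliary claims you would lean on there (that a third pendant ``forces the remaining positions to split off into additional components,'' or that long overlaps ``propagate to a border'') are asserted rather than proved. As written, the argument does not establish the bound of two leaves per label.

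The missing idea is a simple degree bound that makes the whole case analysis unnecessary. Since the first letter of $w$ is not uni-occurrent, position $1$ must be moved by some generator, which therefore has the form $(1,i)$; symmetrically $S$ contains a generator $(j,n)$. Unborderedness forces the palindromic prefix $w[1,i]$ and the palindromic suffix $w[j,n]$ to be disjoint ($i<j$), since otherwise $w$ would have a border. Hence, with $\card S\leq 3$, every position lies in at most two generators, so every vertex of your graph $G$ has degree at most~$2$. A connected graph of maximum degree~$2$ is a path or a cycle and therefore has at most two vertices of degree $\leq 1$; equivalently, as in the paper, the unique shortest path joining two leaves of the same label visits every occurrence of that letter, and each interior point of that path lies in two generators and so is not a leaf. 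That yields at most two leaves per letter class directly. Your nesting/translation observations in Steps~2--3 are not wrong, but they are a detour: the two hypotheses (non-uni-occurrence and unborderedness) are used only to force the prefix and suffix generators to exist and to be disjoint, not to run an exhaustive placement analysis.
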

\begin{proof}
Let $S\subseteq \{(1,i),(j,n),(k,m)\}$. Since $w$ is unbordered, the palindromes $w[1,i]$ and $w[j,n]$ do not overlap, and hence
we have $i < j.$ It follows each $1\leq r\leq n$ is contained in at most two generators. Suppose  $S$ contains two leaves $1\leq s<t\leq n$ with label $a$ for some $a\in \{0,1\}.$  Then there exists a shortest path $(I_q)_{q=1}^l\in S^l$ such that  $s=\rho_{I_l}\cdots \rho_{I_1}(t).$ Since each $1\leq r\leq n$ is contained in at most two generators, this path is unique and the orbit of $t$ visits every occurrence of $a$ in $u.$  Moreover, except for $s$ and $t,$  no other point $x$ in the orbit of $t$ is a leaf since as each such $x$ is contained in two generators $I$ and $J$ with $\rho_I(x)\neq x$ and $\rho_J(x)\neq x.$ 
Thus $S$ contains at most two leaves labeled $a$ for each $a\in \{0,1\}.$ 
\end{proof}

\section{A characterization of binary words with $\psi(x)=3$}

We begin by recalling a few basic facts concerning Sturmian words (see \cite{Ber, Lothaire}). An infinite binary word $\omega \in \{0,1\}^\nats$ is called {\it Sturmian} if $\omega$ contains exactly $n+1$ factors of each given length $n\ge 1.$ 
Each  Sturmian word $\omega$  is aperiodic, uniformly recurrent,  closed under reversal (i.e., if $u$ is a factor of $\omega$ then so is $u^R)$ and balanced
(for any two factors $u$ and $v$ of the same length, $||u|_a - |v|_a| \le 1$
for each $a \in \{0,1\},$ where $|w|_a$ denotes the number of occurrences of $a$ in $w)$. In fact an infinite binary word $\omega \in \{0,1\}^\nats$ is Sturmian if and only if it is aperiodic and balanced.

A factor $u$ of a Sturmian word $\omega$ is called {\it right special} (resp. {\it left special}) if both $u0$ and $u1$ (resp. $0u$ and $1u$) are factors of $\omega.$ Thus $\omega$ contains exactly one right special (resp. left special) factor of each given length, and $u$ is right special if and only if $u^R$ is left special. A factor $u$ of $\omega$ is called bispecial if $u$ is both right and left special. Thus, if $u$ is a bispecial factor of $\omega,$ then $u$ is a palindrome. 
A binary word $x\in \{0,1\}^*$ is
called a {\it central} word if and only if $x\in \P$ and $x0$ and $x1$ are both balanced.  

\noindent We recall the following fact which is a consequence of a result in \cite{deLu} (see also
\cite{CarpdeLu}):

\begin{lemma}[Proposition 22 in \cite{Ber}]\label{CenFW1} A word $x\in \{0,1\}^*$ is
a central word if it is a power of a single letter or if it satisfies the equations
$x=u01v=v10u$ with $u,v\in \{0,1\}^*.$ Moreover in the latter case $u$ and $v$ are central
words and setting $p=|u|+2$ and $q=|v|+2,$ we have that $p$ and $q$ are relatively prime
periods of $x$ and min$\{p,q\}$ is the least period of x.
\end{lemma}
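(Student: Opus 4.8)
The plan is to dispose of the power-of-a-letter alternative immediately and then work with the factorization $x=u01v=v10u$, reading off its two periods and controlling them with the theorem of Fine and Wilf. If $x=a^n$ for a single letter $a$, then $x$ is a palindrome and both $x0$ and $x1$ are balanced (a uniform block followed by one extra symbol cannot create two equal-length factors whose $a$-counts differ by more than $1$), so $x$ is central and there is nothing further to prove. So assume $x=u01v=v10u$, and set $n=|x|=|u|+|v|+2$, $p=|u|+2$, $q=|v|+2$, so that $p+q=n+2$. Comparing the two factorizations, the prefix of length $|u|$ read from $u01v$ and the suffix of length $|u|$ read from $v10u$ both equal $u$; since prefix and suffix of length $|u|$ coincide, $n-|u|=q$ is a period of $x$. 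Symmetrically the prefix and suffix of length $|v|$ both equal $v$, so $n-|v|=p$ is a period. Thus $p$ and $q$ are periods of $x$.

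Next I would establish $\gcd(p,q)=1$ and that $\min\{p,q\}$ is the least period, both via Fine--Wilf. If $d=\gcd(p,q)\ge 2$ then $p+q-d=n+2-d\le n$, so Fine--Wilf gives $x$ the period $d$ and hence $x_i=x_j$ whenever $i\equiv j\pmod d$. But $x=u01v$ puts a $0$ in position $p-1$ while $x=v10u$ puts a $1$ in position $q-1$, and $p-1\equiv q-1\equiv-1\pmod d$ because $d\mid p$ and $d\mid q$; periodicity would force $x_{p-1}=x_{q-1}$, contradicting $0\ne 1$. Hence $\gcd(p,q)=1$. For the least period, take $p\le q$ without loss of generality and let $\pi$ be the least period, so $\pi\le p$. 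If $\pi<p$, then $\pi+p-\gcd(\pi,p)\le 2p-2\le p+q-2=n$ and $\pi+q-\gcd(\pi,q)\le(p-1)+q-1=n$, so Fine--Wilf applied to the pairs $(\pi,p)$ and $(\pi,q)$ produces periods $\gcd(\pi,p)$ and $\gcd(\pi,q)$; minimality of $\pi$ forces each to equal $\pi$, so $\pi\mid p$ and $\pi\mid q$, whence $\pi\mid\gcd(p,q)=1$ and $x=a^n$ --- impossible, as $x$ contains $01$. Therefore $\pi=p=\min\{p,q\}$.

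Finally I would derive centrality. Having shown that $x$ has coprime periods $p,q$ with $|x|=p+q-2$, the characterization of central words by their periods --- the result of de Luca invoked in the statement --- yields at once that $x$ is central. It remains to see that $u$ and $v$ are central. Each of them is a border of $x$ (simultaneously a prefix and a suffix), with $v$ the longest border, of length $n-\pi=q-2$. The hard part will be this last step: transferring the period structure of $x$ to $u$ and $v$. I would argue by induction on $|x|$, showing that the longest border $v$ is either a power of a letter or again admits a palindromic factorization $v=u'01v'=v'10u'$, obtained by a Euclidean descent on the coprime pair $(p,q)$, so that $v$ --- and, symmetrically, $u$ --- is central by the induction hypothesis; equivalently, one may invoke that the class of central words is closed under taking borders. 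This recursive descent, i.e.\ verifying that a border of a central word again splits in the required way, is the genuine obstacle; by contrast the period identities and the two Fine--Wilf applications above are routine once the factorization is in hand.
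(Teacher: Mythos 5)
The paper does not prove this lemma at all: it is imported verbatim as Proposition~22 of Berstel's survey (ultimately from de~Luca and Carpi--de~Luca), so there is no internal proof to compare yours against. Judged on its own terms, most of your argument is sound. The identification of $p=|u|+2$ and $q=|v|+2$ as periods via the borders $u$ and $v$, the coprimality argument (Fine--Wilf applied with $d=\gcd(p,q)\ge 2$ and the clash between the letter $0$ at position $p-1$ and the letter $1$ at position $q-1$), and the least-period argument ($\pi\mid p$ and $\pi\mid q$ via two further Fine--Wilf applications, forcing $\pi=1$ and contradicting the presence of $01$) are all correct, and centrality of $x$ itself does follow from Lemma~\ref{CenFW2} since $|x|=p+q-2$.

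The genuine gap is exactly where you flag it: you never establish that $u$ and $v$ are central, and the induction/Euclidean-descent route you sketch (or the unproved claim that central words are closed under taking borders) is left entirely open, so as written the ``moreover'' clause of the lemma is not proved. You are, however, overestimating the difficulty: once $x$ is known to be central, the paper's definition of centrality ($x\in\P$ with $x0$ and $x1$ balanced) closes the gap in a few lines, with no recursion. Since $x$ is a palindrome and $v$ is both a prefix of $x$ (from $x=v10u$) and a suffix of $x$ (from $x=u01v$), the words $v$ and $v^R$ are prefixes of $x$ of equal length, hence $v=v^R\in\P$; moreover $v1$ is a prefix of $x$ and hence a factor of the balanced word $x1$, while $v0$ is a suffix of the balanced word $x0$, and factors of balanced words are balanced, so $v0$ and $v1$ are both balanced and $v$ is central. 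The symmetric argument ($u0$ a prefix of $x$, $u1$ a suffix of $x1$) shows $u$ is central. With that paragraph added your proof would be complete.
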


\noindent We will also need the following result.

\begin{lemma}\label{unb-central}
Let $u$ be an unbordered factor of a Sturmian word $\omega\in \{0,1\}^\nats.$ Then either $u\in \{0,1\}$ or  $u=axb$, where $\{a,b\}=\{0,1\}$, and $x$ is
a central word.
\end{lemma}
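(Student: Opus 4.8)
The plan is to prove Lemma~\ref{unb-central} by analyzing the structure of an unbordered factor $u$ of a Sturmian word $\omega$ and connecting it to the theory of special factors. The key observation is that an unbordered word of length $\geq 2$ cannot begin and end with the same letter: if it did, that common letter would be both a proper prefix and a proper suffix, giving a border. Hence if $|u|\geq 2$ we must have $u=axb$ with $\{a,b\}=\{0,1\}$ and $x\in\{0,1\}^*$. The whole content of the lemma is then showing that this middle word $x$ must be central, i.e., that $x$ is a palindrome and that both $x0$ and $x1$ are balanced (equivalently, that $x$ is the reversal of a bispecial factor of $\omega$, or equivalently a factor of $\omega$ that extends to both $0x1$ and $1x0$ type words inside $\omega$).

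First I would recall that central words are exactly the palindromic bispecial factors of $\omega$, and that they coincide with the maximal common prefixes (equivalently, the words $x$ for which both $x0$ and $x1$ occur as factors and both $0x$ and $1x$ occur as factors). Since $u=axb$ with $a\neq b$ is a factor of $\omega$, both $ax$ and hence $x$ occur; the goal is to exploit unborderedness to force $x$ to be special on both sides. Concretely, I would argue that because $u=0x1$ (say, taking $a=0,b=1$) is a factor, $x1$ occurs in $\omega$; I then want to show $x0$ also occurs, which would make $x$ right special. Here the natural tool is that $\omega$ is closed under reversal: $u^R=bx^Ra=1x^R0$ is also a factor, and I would relate $x^R$ to $x$. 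If I can show $x$ is a palindrome, then $x=x^R$, and the two factors $0x1$ and $1x0$ together witness that $x$ is bispecial, hence central by the characterization, which is exactly what the lemma asserts.

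The crux is therefore establishing that $x$ is a palindrome. The cleanest route uses balance and the classification of Sturmian factors. I would show that the longest common middle portion is constrained: suppose $x$ were not a palindrome; then among the factors of length $|x|$ of $\omega$ there is at most one right special and one left special factor of each length, and I would derive that the specific occurrences forced by $0x1$ together with unborderedness of $0x1$ pin $x$ down to be the unique palindromic bispecial factor of its length. The balance condition on Sturmian words is what prevents $x$ from being an arbitrary factor: an unbordered factor $0x1$ of maximal structure must have $x$ equal to a central word because any non-central $x$ would either fail to extend to both $0x1$ and $1x0$ or would introduce a border. I would make this precise by invoking Lemma~\ref{CenFW1}: I can write candidate decompositions $x=u'01v'$ and check against the equation $x=u'01v'=v'10u'$ that governs central words, using the reversal-closure of $\omega$ to supply the second equality from the occurrence of $u^R$.

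The main obstacle I anticipate is the passage from ``$u=axb$ is unbordered'' to ``$x$ is precisely the central (palindromic, balanced-extending) word'', rather than merely some factor. Unborderedness gives the shape $axb$ essentially for free, but ruling out non-palindromic or non-special middles requires genuinely using the Sturmian balance/special-factor machinery, and in particular the reversal symmetry must be deployed carefully to convert the one-sided special behavior coming from the single factor $u$ into the two-sided (bispecial) behavior defining centrality. I expect the delicate bookkeeping to lie in matching the lengths and positions so that the occurrences $0x1$ and $1x^R0$ combine correctly; once $x=x^R$ is secured, the identification of $x$ as central via Lemma~\ref{CenFW1} should follow without further difficulty.
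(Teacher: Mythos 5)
Your setup is right and it matches the paper's: an unbordered word of length at least two must begin and end with distinct letters, so $u=axb$ with $\{a,b\}=\{0,1\}$, and once $x$ is known to be a palindrome the rest is easy --- $axb$ together with its reversal $bxa$ (Sturmian languages are closed under reversal) makes $x$ bispecial, hence a palindrome whose one-letter extensions $x0,x1$ are factors of $\omega$ and therefore balanced, i.e.\ a central word. The problem is that the one genuinely hard step, $x=x^R$, is exactly the step you never prove. What you offer in its place is not an argument: ``I would derive that the specific occurrences forced by $0x1$ together with unborderedness pin $x$ down to be the unique palindromic bispecial factor of its length'' asserts the conclusion rather than deriving it, and nothing in your sketch forces $x$ to be special at all --- both $x$ and $x^R$ are factors of $\omega$ whether or not $x$ is a palindrome, so uniqueness of the right (or left) special factor of each length gives no traction by itself. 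Your fallback via Lemma~\ref{CenFW1} is circular: to exhibit a factorization $x=u'01v'=v'10u'$ you would already need $x=x^R$ together with $u'$ and $v'$ palindromic, which is precisely what is to be proved. You acknowledge this yourself (``the main obstacle,'' ``I expect the delicate bookkeeping''), so by your own account the proof is incomplete at its crux.

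The missing idea, as the paper carries it out, is a short extremal argument on prefix--suffix mirror pairs of $x$, using the balance of $u$ itself rather than abstract counting of special factors. Suppose the mirror symmetry of $x$ breaks: $vc$ is a prefix of $x$ and $dv^R$ a suffix of $x$ with $\{c,d\}=\{0,1\}$. Compare the prefix $avc$ of $u$ with its suffix $dv^Rb$, two factors of $\omega$ of equal length: balance forces $c=b$ and $d=a$, since the other orientation makes their numbers of $0$'s differ by two. Then $avb$ is a prefix of $u$ and $av^Rb$ is a suffix of $u$; both occur in $\omega$, so by reversal closure $v$ is bispecial in $\omega$, hence a palindrome, hence $avb=av^Rb$ is simultaneously a prefix and a suffix of $u$. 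Unborderedness leaves no room for this word to be a proper border, so $u=avb$ and $x=v$, which is central; equivalently, the mirror symmetry of $x$ cannot break at any proper prefix, so $x$ is a palindrome and the bispecial argument applies to $x$ itself. This interplay --- balance to orient the letters $c,d$, reversal closure to make $v$ a palindrome, and unborderedness to kill the resulting border --- is what your proposal gestures at but never executes.
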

\begin{proof}
If $u\notin\{0,1\},$ then we can write $u=axb$ with $\{a,b\}=\{0,1\}.$ We claim that $x$ is a palindrome.
In fact, suppose $vc$ (resp.  $dv^R)$ is a prefix (resp. suffix) of $x$ with  $v\in \{0,1\}^*$ and $\{c,d\}=\{0,1\}.$ Since $u$ is balanced it follows that $c=b$ and $d=a.$ Since $av^Rb$ and $avb$ are both factors of $\omega,$ it follows that $v$ is a bispecial factor of $\omega,$ and hence a palindrome. Thus $avb$ is both a prefix and a suffix of $u.$ Since $u$ is unbordered, it follows that $u=avb,$ and hence $x=v.$ Thus $x$ is central. 
\end{proof}

We also recall the following extremal property of the Fine and Wilf theorem~\cite{FW}
concerning central words due to de~Luca and Mignosi \cite{deLuMign}

\begin{lemma}[Proposition 23 in \cite{Ber}]\label{CenFW2} A word $x$ is a central word
if and only if there exist relatively prime positive integers $p$  and $q$  with $|x|=p+q-2$
such that $x$ has periods $p$  and~$q.$\end{lemma}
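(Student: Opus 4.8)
The plan is to prove the two implications separately, using Lemma~\ref{CenFW1} for the forward direction and the extremal (tight) case of the Fine--Wilf theorem for the converse.

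\emph{($\Rightarrow$).} This is immediate from Lemma~\ref{CenFW1}. If $x=a^n$ is a power of a single letter, take $p=1$ and $q=n+1$: these are relatively prime, $x$ trivially has both periods, and $|x|=n=p+q-2$. Otherwise Lemma~\ref{CenFW1} writes $x=u01v=v10u$ and provides relatively prime periods $p=|u|+2$ and $q=|v|+2$; since $|x|=|u01v|=|u|+|v|+2$, we get exactly $|x|=p+q-2$.

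\emph{($\Leftarrow$).} Suppose $x$ has relatively prime periods, which we may order as $p\le q$, and set $n=|x|=p+q-2$. If $p=q$ then $p=q=1$ and $x=\eps$ is central; if $p=1$ then $x$ has period $1$, so $x$ is a power of a single letter and hence central by Lemma~\ref{CenFW1}. So assume $2\le p<q$. The first step is to extract two borders. Since $x[i]=x[i+p]$ for $1\le i\le n-p=q-2$, the length-$(q-2)$ prefix $v:=x[1,q-2]$ equals the length-$(q-2)$ suffix $x[p+1,n]$; since $x[i]=x[i+q]$ for $1\le i\le n-q=p-2$, the length-$(p-2)$ prefix $u:=x[1,p-2]$ equals the length-$(p-2)$ suffix $x[q+1,n]$. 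Concatenating, this yields the two factorizations $x=u\,x[p-1]x[p]\,v=v\,x[q-1]x[q]\,u$.

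It remains to identify the two ``junction'' pairs $x[p-1]x[p]$ and $x[q-1]x[q]$. Consider the graph $G$ on the positions $\{1,\ldots,n\}$ with an edge joining $i$ to $i+p$ (for $i\le q-2$) and $i$ to $i+q$ (for $i\le p-2$); any word with periods $p$ and $q$ is constant on each connected component of $G$. Because $\gcd(p,q)=1$ and $n=p+q-2$ is exactly the extremal Fine--Wilf length, the refined form of the Fine--Wilf theorem \cite{FW,deLuMign} shows that $G$ has exactly two components. Consequently $x$ uses exactly two letters and is non-constant, and the description of the two components places $p-1,p$ in opposite components and likewise $q-1,q$; thus $x[p-1]\neq x[p]$ and $x[q-1]\neq x[q]$. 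Moreover the reversal $i\mapsto n+1-i$ is an automorphism of $G$ preserving each component, so $x$ is a palindrome; hence $x[q-1]x[q]=(x[p-1]x[p])^R$, and the two junctions are $01$ and $10$ in some order. After renaming the letters if necessary we obtain $x=u01v=v10u$, and Lemma~\ref{CenFW1} now gives that $x$ is central.

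The main obstacle is the middle step: that $G$ has \emph{exactly} two components and that the specific pairs $\{p-1,p\}$ and $\{q-1,q\}$ straddle them. This is precisely the defect case of Fine--Wilf, and I would prove it either by checking directly that $G$ is a forest --- it has $n$ vertices and exactly $(q-2)+(p-2)=n-2$ edges, and a short argument from $\gcd(p,q)=1$ together with these edge counts excludes cycles, so that the number of components is $n-(n-2)=2$ --- or from the explicit two-block description coming from the continued-fraction construction of central words. The secondary subtlety, that the reversal fixes each component rather than interchanging them (automatic when $n$ is odd, and requiring the remark that $1$ and $n$ lie in a common component when $n$ is even), can be bypassed altogether by uniqueness: both $x$ and the central word with periods $p,q$ produced by Lemma~\ref{CenFW1} are non-constant binary words with periods $p,q$ and length $p+q-2$, hence agree up to renaming of letters, so $x$ is central since centrality is preserved under swapping $0$ and $1$.
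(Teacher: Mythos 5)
First, a point of comparison: the paper does not prove this statement at all --- it is quoted verbatim as Proposition~23 of \cite{Ber} and attributed to de~Luca and Mignosi \cite{deLuMign}. So there is no in-paper proof to measure yours against; what you have written is essentially the standard argument from the literature (analysis of the extremal case of Fine--Wilf via the graph of position identifications), and its overall architecture is sound. The forward direction is fine, granting the usual reading of Lemma~\ref{CenFW1} as a biconditional (which is how the paper itself uses it, e.g.\ in Proposition~\ref{A}), and granting the convention that any $q\ge |x|$ is vacuously a period (needed for $x=a^n$, $q=n+1$).

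Two places in the converse do not yet constitute a proof as written. First, the sentence ``$G$ has exactly two components; consequently $x$ uses exactly two letters and is non-constant'' contains a non sequitur: two components only give that $x$ uses \emph{at most} two letters. The word $x=a^{p+q-2}$ has periods $p$ and $q$, so the constant case genuinely occurs and must be split off (it is central by Lemma~\ref{CenFW1}, so this is a one-line repair, but the claim as stated is false). Second, your justification that $G$ is a forest --- ``edge counts together with $\gcd(p,q)=1$ exclude cycles'' --- is not an argument: the edge count $n-2$ is compatible with cycles unless acyclicity is proved separately. The clean fix is to reduce mod $p+q$: since $i+p\equiv i-q \pmod{p+q}$, both edge types become steps of $+p$ on $\mathbb{Z}_{p+q}$, which (by coprimality) is a single $(p+q)$-cycle; $G$ embeds into this cycle minus the two non-adjacent vertices $0$ and $p+q-1$, hence is a disjoint union of exactly two paths. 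This also delivers, for free, the facts you assert about $p-1,p$ and $q-1,q$ lying in opposite components (the two paths have endpoint pairs $\{p,q-1\}$ and $\{p-1,q\}$), and it makes your ``uniqueness'' fallback rigorous --- though note that that fallback additionally needs the \emph{existence} of a non-constant central word with periods $p,q$ for every coprime pair, which Lemma~\ref{CenFW1} as stated in the paper does not supply; you would have to import the standard-word construction or cite \cite{deLuMign} for it. With these repairs the proof is correct.
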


We will also use the following property by Lothaire~\cite{Lothaire-II} several times to find a
period of a palindrome which has a palindromic prefix (and suffix).

\begin{lemma}[\cite{Lothaire-II}]\label{uv}
If $uv=vu'$, then $|u|$ is a period of $uv$.
\end{lemma}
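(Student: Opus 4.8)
The plan is to work directly with the word $w=uv$ and to exploit that the hypothesis $uv=vu'$ forces $v$ to appear simultaneously as a prefix and as a suffix of $w$. I would begin with the trivial length bookkeeping: since $uv=vu'$ we have $|u|=|u'|$, so $w$ has length $n=|u|+|v|$. Recalling that $p$ is a period of $w$ means $w[i]=w[i+p]$ for all admissible $i$, the claim ``$|u|$ is a period of $uv$'' unwinds to the assertion that $w[i]=w[i+|u|]$ for every $1\le i\le n-|u|=|v|$.

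Next I would pin down the two occurrences of $v$ inside $w$. Reading $w=vu'$, the prefix of $w$ of length $|v|$ is exactly $v$, so $w[i]=v[i]$ for $1\le i\le |v|$. Reading $w=uv$ instead, the suffix of $w$ of length $|v|$ is exactly $v$, and it occupies the positions $|u|+1,\ldots,n$; hence $w[|u|+i]=v[i]$ for $1\le i\le |v|$. Combining these two identities yields $w[i]=v[i]=w[|u|+i]$ for all $1\le i\le |v|$, which is precisely the required periodicity statement, so $|u|$ is a period of $uv$.

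In essence the content is the classical observation that a border of length $\ell$ produces a period of length $|w|-\ell$: here $v$ is a border (prefix and suffix) of $uv$, and $|uv|-|v|=|u|$. I expect no genuine obstacle. The only points demanding care are the alignment of the index ranges and the remark that the two copies of $v$ overlap precisely when $|u|<|v|$ (which is what lets the equalities chain together), along with the degenerate cases in which $u$ or $v$ is empty, where the statement is vacuous or immediate.
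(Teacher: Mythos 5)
Your proof is correct. The paper does not prove this lemma at all --- it is quoted from Lothaire's \emph{Algebraic Combinatorics on Words} as a known fact --- so there is no in-paper argument to compare against; your argument is the standard one (the border/period duality: $v$ is simultaneously a prefix and a suffix of $w=uv$, hence $w[i]=v[i]=w[|u|+i]$ for $1\le i\le |v|$, which is exactly the statement that $|u|$ is a period). One small quibble: your closing remark that the overlap of the two copies of $v$ (when $|u|<|v|$) is ``what lets the equalities chain together'' is unnecessary --- the index computation you gave already establishes the periodicity condition verbatim, with no chaining required, and it is valid whether or not the two occurrences of $v$ overlap.
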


\noindent A word $y\in \{0,1\}^\nats$ is called \emph{double Sturmian}
if $y$ is a suffix of $d_A(x)$ for some Sturmian word $x$ and  $A\subseteq \{0,1\}$ (where $d_A$ is the doubling morphism defined in section 1).  By taking $A=\emptyset,$  each Sturmian word is double Sturmian.

\begin{proposition}\label{A} Let $y$ be a factor of a double Sturmian word
$\omega \in \{0,1\}^\nats.$
Then $\mu(y)\leq 3.$
\end{proposition}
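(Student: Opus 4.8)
The plan is to reduce, via the monotonicity of $\mu$ under factors (Lemma~\ref{heritage}), to a single clean family of words and to separate the Sturmian content from the effect of the doubling morphism. Since $\omega$ is a suffix of $d_A(x)$ for some Sturmian word $x$ and some $A\subseteq\{0,1\}$, every factor $y$ of $\omega$ is a factor of $d_A(y')$ for a suitable factor $y'$ of $x$; by Lemma~\ref{heritage} it therefore suffices to prove $\mu(d_A(y'))\le 3$ for every factor $y'$ of $x$ and every $A\subseteq\{0,1\}$. Enlarging $y'$ if necessary---again legitimate by Lemma~\ref{heritage}, since $\mu(y')\le\mu(z')$ whenever $y'$ is a factor of $z'$---and using that a Sturmian word is uniformly recurrent with arbitrarily long unbordered factors (Lemmas~\ref{Lyndon} and~\ref{needed}), I may assume $y'$ is one such unbordered factor. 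The trivial cases (a single letter, or a power of one letter, whose doubled images have $\mu\le 2$) I would dispose of by direct computation, so by Lemma~\ref{unb-central} I am reduced to $y'=0x1$ (the case $1x0$ being symmetric) with $x$ a central word.

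The heart of the argument is to produce a palindromic generating set of size at most three for $z'=0x1$. Writing $x=u01v=v10u$ with $u,v$ central and $p=|u|+2$, $q=|v|+2$ coprime periods of $x$ satisfying $|x|=p+q-2$ (Lemmas~\ref{CenFW1} and~\ref{CenFW2}), I observe the factorization $z'=0x1=(0u0)(1v1)$ into two palindromes occupying positions $[1,p]$ and $[p+1,p+q]$, where $|z'|=p+q$. I then take the three palindromic generators $0u0=(1,p)$, $x=(2,p+q-1)$, and $1v1=(p+1,p+q)$. All three are genuine palindromic factors, so the easy half of the criterion in Lemma~\ref{crit} is automatic: every reflection among them preserves letters, whence no two positions carrying different letters can ever be identified.

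The main obstacle is the converse direction of Lemma~\ref{crit}: that these three reflections connect all occurrences of $0$ into a single orbit and likewise for $1$. Letter preservation being automatic, the content is \emph{surjectivity}, i.e.\ that no letter class splits. Here the coprime periods $p,q$ of $x$ enter. The product of the reflection about the center of $0u0$ with that about the center of $x$ is a partial translation by $q$, while pairing $x$ with $1v1$ gives a partial translation by $p$; on their domains these coincide with the period maps $x[i]=x[i+q]$ and $x[i]=x[i+p]$, so they move along the word while staying inside one letter class. Because $\gcd(p,q)=1$, iterating these partial translations (folding back at the palindrome centers whenever a domain boundary is reached) should sweep out every occurrence of a given letter. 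I would make this precise by induction on $|x|$ through the recursive decomposition of central words (Lemma~\ref{CenFW1}), the sub-blocks $0u0$ and $1v1$ carrying the inductively known generating data for the shorter central words $u$ and $v$, with Lemma~\ref{uv} supplying the relevant period of a palindrome possessing a palindromic prefix. This orbit analysis, rather than any single inequality, is the delicate part.

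Finally I would harvest the doubling. A parity count shows that among the three generators $0u0,\,x,\,1v1$ there is always an odd-length palindrome centered at $0$ and another centered at $1$: when both $p,q$ are odd these are $0u0$ and $1v1$, and when one is even they are $x$ together with one of the outer palindromes. Hence for each $a\in A$ the generating set contains a generator $(r,r+2s)$ with $z'[r+s]=a$, and Lemma~\ref{double} yields $\mu(d_{\{a\}}(z'))\le 3$ without increasing the number of generators. Since doubling one letter sends an odd palindrome centered at the other letter to an odd palindrome centered at that letter (doubling commutes with reversal and fixes the unique central letter), I may apply Lemma~\ref{double} a second time when $A=\{0,1\}$, obtaining $\mu(d_A(z'))\le 3$ in every case, with Corollary~\ref{dcor} available as a fallback wherever slack exists. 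Combined with the reductions of the first paragraph, this gives $\mu(y)\le 3$.
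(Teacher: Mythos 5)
Your overall architecture coincides with the paper's: reduce via Lemma~\ref{heritage} to an unbordered factor $y'=0x1$ of the underlying Sturmian word, invoke Lemma~\ref{unb-central} to get $x$ central, take the three generators $(1,p)$, $(2,p+q-1)$, $(p+1,p+q)$ coming from $x=u01v=v10u$, and then push the result through the doubling via Lemma~\ref{double}. However, at the one step that carries the real content --- showing that these three palindromic constraints actually \emph{generate} $0x1$ --- you substitute a sketched orbit argument (``iterating these partial translations \dots should sweep out every occurrence of a given letter'', ``I would make this precise by induction on $|x|$'') for a proof. This is exactly the delicate point, and the partial translations you describe have restricted domains, so it is not automatic that the orbit of an occurrence of $a$ reaches every other occurrence before leaving the domain of every available reflection; nothing in your outline rules out a letter class splitting into two orbits. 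The paper closes this gap in one stroke and without any orbit analysis: if $w$ is \emph{any} word of length $|y'|$ whose factors at positions $(1,p)$, $(p+1,p+q)$, $(2,p+q-1)$ are palindromes, then by Lemma~\ref{uv} its central part $x'$ has periods $p$ and $q$, which are coprime with $|x'|=p+q-2$, so Lemma~\ref{CenFW2} (the Fine--Wilf extremal property) forces $x'$ to be a central word, hence either a power of a letter or word isomorphic to $x$; in both cases $w$ is a morphic image of $y'$, which is precisely Definition~\ref{gen}. You cite Lemma~\ref{CenFW2} only to record $|x|=p+q-2$, and never use its converse direction, which is where the argument actually lives.

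A secondary gap: you assert ``a parity count shows'' that among $0u0$, $x$, $1v1$ there is an odd-length palindrome centered at $0$ and another centered at $1$. Coprimality of $p$ and $q$ gives you that two of the three lengths are odd, but not that the two central symbols are distinct letters; your specific attribution is in fact false (for $x=101101$ one has $u=1$, $v=101$, both $p=3$ and $q=5$ odd, yet $0u0=010$ is centered at $1$ and $1v1=11011$ at $0$). The distinctness of the central symbols is what Lemma~\ref{double} needs when $A=\{0,1\}$, and the paper derives it from Lemma~\ref{leaves} (the bound on leaves of a $3$-element generating set of an unbordered word); see also Proposition~\ref{LPPS}. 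Your observation that doubling one letter preserves an odd palindrome centered at the other letter, so that Lemma~\ref{double} can be applied twice, is correct and matches what the paper does implicitly; but as written the proposal needs both the Fine--Wilf step and the distinct-centers step filled in before it constitutes a proof.
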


\begin{proof} Let $y$ be a factor of a double Sturmian word $\omega.$
Thus there exists a Sturmian word $\omega' \in \{0,1\}^\nats$ and a subset $A\subseteq
\{0,1\}$ such that $\omega$ is a suffix of $d_A(\omega').$ Let $y'$ be the shortest
unbordered factor of $\omega'$ such that $y$ is a factor of $d_A(y')$. Because $\omega'$
is aperiodic and uniformly recurrent, such a factor $y'$ always exists. Now by
Lemma~\ref{heritage} it is enough to show that $d_A(y')$ is palindromically generated by a
set with at most three generators. 

Since $y'$ is unbordered, by Lemma~\ref{unb-central},
we may write $y'=axb$ where $x\in \{0,1\}^*$ is a central word and $\{a,b\}= \{0,1\}$.
Without loss of generality we can assume that $a=0$ and $b=1.$ We
first consider the case where $x$ is a power of a single letter. By symmetry, we can suppose
$y'=01^n$ with $n \geq 1.$ In this case $d_a(y')\in \{001^n, 01^{2n}, 001^{2n}\},$ and hence $\mu(d_A(y'))\leq 3.$ 
Next we assume $x$ is not a power of a single letter. By Lemma~\ref{double} it suffices to show that $y'$ is
palindromically generated by a set $S'$ with $\card S'\leq 3$ and containing two generators
which determine two  palindromes of odd length in $y'$ with distinct central symbols .
Now by Lemma~\ref{CenFW1} there exist
central words $u$ and $v$ such that  $y'=0u01v1.$ Put $U=0u0$ and $V=1v1.$ We claim that
$y'$ is palindromically generated by the set
\[
S'=\{(1, |U|), ( |y'|-|V|+1,|y'|), (2, |x|+1)\}.
\]
We first note that $y'[1,|U|]=U$, \ $y'[|y'|-|V|+1,|y'|]=V$ and $y'[2, |x|+1]=x, $ whence
$y'[1,|U|]$, \ $y'[|y'|-|V|+1,|y'|]$, \ $y'[2, |x|+1] \in \P.$ Next let $w\in \B^*$ be a word
with $|w|=|y'|.$ Set $U'=w[1,|U|]$, \ $V'=w[|y'|-|V|+1,|y'|],$ and $x'=w[(2, |x|+1)]$  so
that $w=a'x'b'$ with $a',b' \in \B.$ Suppose $U', V', x'\in \P.$ It follows, e.g., from
Lemma~\ref{uv} that $x'$ has now periods $|U'|=|U|$ and $|V'| = |V|$ and by
Lemma~\ref{CenFW1} they are relatively prime because $|U|$ and $|V|$ are. Since
$|x'|=|U'|+|V'|-2,$ we deduce by Lemma~\ref{CenFW2} that $x'$ is a central word. Thus
$x'$ is either a power of a single letter or it is isomorphic to $x.$ In the first case $w$ is
also a power of a single letter. In the second case, $w$ is word isomorphic to $y'.$ Thus in
either case there exists a mapping $\nu\colon \A\rightarrow \B$ with $\nu(y')=w.$ So $y'$ is
palindromically generated by $S'=\{(1, |U|), ( |y'|-|V|+1,|y'|), (2, |x|+1)\}$ where two of
the associated palindromes  have odd length. Indeed, by Lemma~\ref{CenFW1}, $|U|$ and
$|V|$ are relatively prime and $|x|=|U|+|V|-2$. So either $|U|$ and $|V|$ are odd or
only the other one is odd but then $|x|$ is odd, too. Because $y'\in \{0,1\}^*$ is
unbordered and palindromically generated by a set of size $3,$  Lemma~\ref{leaves} gives
that $y'$ has at most two leaves with label $0$ and at most two leaves with label $1$. Thus
the central symbols of these odd palindromes are necessarily distinct since  the first and the
last letter of $y'$ are also leaves with different symbols.
\end{proof}

We have an auxiliary result which shows that the generating set obtained in
Proposition~\ref{A} for $y=axb$ includes both the longest palindromic prefix and the
longest palindromic suffix of~$y$. This follows from the following refinement of
Lemma~\ref{CenFW1}.

\begin{prop}\label{LPPS} Let $x\in \{0,1\}^*$ be a central word which is not a power of
a single letter. Let $u$ and $v$ be as in Lemma~\ref{CenFW1}. Then $0u0$ (respectively
$1v1$) is the longest palindromic prefix (respectively suffix) of $0x1.$ Moreover, two of the
three palindromes $\{x, 0u0, 1v1\}$ are of odd length and have distinct central
symbols.\end{prop}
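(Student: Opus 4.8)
The plan is to exploit the two factorizations of $0x1$ furnished by Lemma~\ref{CenFW1}. Writing $p=|u|+2$ and $q=|v|+2$, the identity $x=u01v$ gives $0x1=(0u0)(1v1)$, while $x=v10u$ gives the mirror identity $1x0=(1v1)(0u0)$; recall that $p,q$ are coprime periods of $x$ with $|x|=p+q-2$ and $\min\{p,q\}$ the least period. Since $x$ is a palindrome we have $(0x1)^R=1x0$, so a suffix $s$ of $0x1$ is a palindrome if and only if $s^R$ is a palindromic prefix of $1x0$; hence the longest palindromic suffix of $0x1$ is the reversal of the longest palindromic prefix of $1x0$. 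Thus it suffices to prove the prefix statement, that $0u0$ is the longest palindromic prefix of $0x1$, as the suffix statement follows by the same argument applied to the factorization $1x0=(1v1)(0u0)$, with the roles of $(0,1,u,v,p,q)$ interchanged.

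To prove the prefix claim, note that $0u0$ is indeed a palindromic prefix, and suppose toward a contradiction that $w$ is a palindromic prefix of $0x1$ with $|w|=p+d$ and $1\le d\le q-1$ (the upper bound because $0x1$ is not itself a palindrome). Then $0u0$ is a proper border of $w$, so $w$ has period $d$. Consequently the prefix $P=x[1,p+d-1]$ carries both the period $p$ inherited from $x$ and the period $d$ coming from $w$; since $|P|=p+d-1$, the Fine--Wilf theorem~\cite{FW} gives that $P$ has period $g=\gcd(p,d)$. Because $g\mid p$ and $|P|\ge p$, the prefix $x[1,p]$ is an exact $g$-power, so $x$, being a prefix of $(x[1,p])^\omega$, has period $g$. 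Now $g\le d\le q-1$, so $g<q$; as the least period of $x$ is $\min\{p,q\}$, the case $p>q$ is already impossible. In the case $p<q$, since $g$ divides $p$ and no period can be smaller than the least period $\min\{p,q\}=p$, we are forced to have $g=p$, that is, $p\mid d$.

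It remains to rule out $p\mid d$, and this is the step I expect to be the main obstacle, since it is exactly the extremal situation that Fine--Wilf cannot decide. Here I would argue directly from the periodicity pattern: as $x$ has period $p$ with $x[1,p]=u01$, we get $x[p-1]=0$, while $x[d]=x[p]=1$ upon reducing $d$ modulo $p$. Translating into $w=0x1[1,p+d]$, the positions $p$ and $d+1$ satisfy $w[p]=x[p-1]=0$ and $w[d+1]=x[d]=1$; since $p+(d+1)=|w|+1$, these two positions are exchanged by the palindromic reflection of $w$, contradicting $w[p]=w[d+1]$. This completes the proof that $0u0$ is the longest palindromic prefix of $0x1$.

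Finally, for the assertion about odd lengths and central symbols I would use the elementary fact that a binary palindrome of odd length has as its central letter precisely the symbol occurring an odd number of times, while a binary palindrome of even length contains each symbol an even number of times. The lengths are $|0u0|=p$, $|1v1|=q$ and $|x|=p+q-2$; since $\gcd(p,q)=1$ forbids both $p$ and $q$ even, a parity check shows that exactly two of these three lengths are odd. Comparing the count parities $|0u0|_a\equiv|u|_a$, $|1v1|_a\equiv|v|_a$ and $|x|_a\equiv|u|_a+|v|_a+1\pmod 2$ then shows, in each of the three cases, that the two odd-length palindromes have different symbols of odd multiplicity, hence distinct central symbols. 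This last part is routine bookkeeping once the two odd-length palindromes have been identified.
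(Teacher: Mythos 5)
Your proof is correct. For the main claim (that $0u0$ is the longest palindromic prefix of $0x1$) you follow essentially the same strategy as the paper: a longer palindromic prefix yields, via its border $0u0$, a second period, Fine--Wilf is applied in the extremal range $p+d-1\geq p+d-\gcd(p,d)$, the minimality of $\min\{p,q\}$ forces $p\mid d$, and a letter-position contradiction finishes the job. The bookkeeping differs slightly -- the paper applies Fine--Wilf to $x$ itself with periods $p$ and $p'=|x|-|u'|$ (note $p'=q-d$ in your notation) and extracts the contradiction by comparing the first letter of the length-$p'$ suffix of $x$ computed two ways, whereas you apply Fine--Wilf to the prefix $x[1,p+d-1]$ and use the palindromic reflection of $w$ pairing positions $p$ and $d+1$ -- but these are dual formulations of the same argument, and your explicit reduction of the suffix case to the prefix case via reversal is just a spelled-out version of the paper's ``similarly one deduces.'' Where you genuinely diverge is the final assertion about odd lengths and distinct central symbols: the paper deduces it from Lemma~\ref{leaves}, which implicitly relies on $0x1$ being unbordered and palindromically generated by a three-element set (facts drawn from the proof of Proposition~\ref{A}), while you give a self-contained parity count of letter multiplicities ($|0u0|_a\equiv|u|_a$, $|1v1|_a\equiv|v|_a$, $|x|_a\equiv|u|_a+|v|_a+1 \pmod 2$, and the central letter of an odd binary palindrome is the one of odd multiplicity). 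Your version is more elementary and decouples the proposition from the generating-set machinery, at the cost of a three-case check that you only sketch -- but the sketch is accurate and the cases do close.
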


\begin{proof} Since $u$ is a central word and hence a palindrome, we have that $0u0$ is
a palindromic prefix of $0x1.$  It remains to show that it is the longest such prefix. Suppose
to the contrary that $0x1$ admits a palindromic prefix $0u'0$ with $|u'|>|u|.$ Then by
lemmas~\ref{CenFW1} and \ref{uv}, we have that $p=|u|+2, q=|v| +2$ and $p'=|x|-|u'|$
are each periods of $x.$ Since $p'=|x|-|u'|<|x|-|u|=q,$ it follows from
Lemma~\ref{CenFW1}
 that the min$\{p,q\}=p.$
Also, as $|x|=|u|+|v|+2\leq |u'| +|v| +1,$ we deduce that
\[|x|\geq |x|-|u'|+|x|-|v|-1=p'+p-1\geq p'+p-\mbox{gcd}(p,p').\]
But since both $p$ and $p'$ are periods of $x,$ it follows from the Fine and Wilf Theorem
\cite{FW} that $x$ has period $\mbox{gcd}(p,p')$ which by Lemma~\ref{CenFW1} is equal
to $p.$ Whence $p$ divides $p'.$ Let $z$ denote the suffix of $x$ of length $p'.$ Since
$0u'0$ is a palindromic prefix of $0x1$ it follows that $z$ begins in $0.$ On the other hand,
since $10u$ is a suffix of $x$ of length $p$ and $p$ divides $p',$ it follows that~$z$ begins
in~$1.$ This contradiction proves that $0u0$ is the longest palindromic prefix of~$0x1.$
Similarly one deduces that $1v1$ is the longest palindromic suffix of~$0x1.$

By Lemma~\ref{CenFW1}, $p$ and $q$ are relatively prime so two of the three palindromes
$\{x, 0u0, 1v1\}$ have odd length and by Lemma~\ref{leaves} the central symbols of these
have to be different.
\end{proof}


\noindent We next prove the converse of Theorem~\ref{main}, namely:

\begin{proposition}\label{converse}
Suppose $w\in \{0,1\}^+$ is such that $\mu(w)\leq 3.$ Then $w$ is a factor of a double Sturmian
word~$\omega.$
\end{proposition}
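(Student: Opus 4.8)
The plan is to reverse-engineer the structure of the proof of Proposition~\ref{A}. We are given $w\in\{0,1\}^+$ with $\mu(w)\le 3$, and we want to produce a Sturmian word $x$ and a set $A\subseteq\{0,1\}$ such that $w$ is a factor of a suffix of $d_A(x)$. By Lemma~\ref{heritage} we may freely enlarge $w$: if we can embed $w$ inside a larger word that is a factor of a double Sturmian word, we are done, so it suffices to analyse words $w$ that are already as structured as possible. The natural first reduction is to pass to an unbordered factor. By Proposition~\ref{aperiodic2} and Lemma~\ref{needed}, aperiodicity forces long unbordered factors with repeated first and last letters, and I would use $\mu(w)\le 3$ together with Lemma~\ref{leaves} to constrain the generating set sharply. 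The key point is that an unbordered binary $w$ in which both letters are non-uni-occurrent and which is palindromically generated by $S$ with $\card S\le 3$ must have its generators arranged as a palindromic prefix $(1,i)$, a palindromic suffix $(j,n)$ with $i<j$ (non-overlapping by unborderedness), and at most one interior generator $(k,m)$.

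First I would dispose of the degenerate cases: if one of the two letters is uni-occurrent, or if $w$ is a power of a single letter, then $w$ is a factor of $0^k1^k$-type words that are themselves suffixes of $d_A(x)$ for trivial Sturmian $x$ (e.g.\ $w=0^a1^b$ with small exponents sits inside a doubled Sturmian word). The substantial case is when $w=axb$ with $\{a,b\}=\{0,1\}$, $w$ unbordered, both letters non-uni-occurrent, and $S=\{(1,i),(j,n),(k,m)\}$ with exactly three generators. Here the strategy is to show that the interior string $x$ (after stripping the first and last letters) is forced to be a \emph{central} word, which by Lemma~\ref{unb-central} is exactly the condition making $axb$ an unbordered factor of a Sturmian word; then $w$ (and hence the doubled version, if $A\ne\emptyset$) is a factor of a double Sturmian word by Definition~\ref{DS}.

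The main obstacle, and the heart of the argument, is extracting two coprime periods of $x$ from the generating data so that Lemma~\ref{CenFW2} applies. I expect the mechanism to mirror Proposition~\ref{A} run backwards: the palindromic prefix $(1,i)$ and palindromic suffix $(j,n)$, via Lemma~\ref{uv}, furnish two periods of the central portion, and the requirement that $S$ actually \emph{generates} (i.e.\ merges all occurrences of each letter into one equivalence class, by Lemma~\ref{crit}) forces the length relation $|x|=p+q-2$ with $\gcd(p,q)=1$; otherwise the reflection orbits would split into too many classes and $w$ could not be binary with both letters recurrent. The delicate part will be handling a possible interior generator $(k,m)$ and the case where $A\ne\emptyset$ (a doubled letter), where one must ``undouble'' $w$ by inverting $d_A$ to recover the genuinely Sturmian core. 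I would treat the doubling by arguing that any repeated-letter block forced by the even-length palindrome in $S$ corresponds precisely to $d_A$ applied to a shorter word generated by at most three generators whose central piece is central, invoking Lemma~\ref{double} and Corollary~\ref{dcor} in the reverse direction, and then conclude via Lemma~\ref{CenFW1} and Lemma~\ref{CenFW2} that the undoubled core is $axb$ with $x$ central, completing the embedding into a double Sturmian word.
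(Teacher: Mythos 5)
Your plan is genuinely different from the paper's, but as it stands it has two serious gaps. First, the reduction to an unbordered word with both letters non-uni-occurrent is not available here. In Proposition~\ref{A} one may shrink to an unbordered factor $y'$ of the Sturmian word because Lemma~\ref{heritage} gives $\mu(y)\leq\mu(d_A(y'))$ --- the monotonicity runs in the useful direction for the \emph{forward} implication. For the converse it runs the wrong way: you are handed a finite $w$ with $\mu(w)\leq 3$, you cannot enlarge it (no longer word with $\mu\leq 3$ containing it is given to exist), and passing to an unbordered \emph{factor} $v$ of $w$ and embedding $v$ into a double Sturmian word says nothing about the letters of $w$ outside $v$. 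Lemma~\ref{needed} is about infinite aperiodic words and does not apply to the finite $w$. So the entire setup ``$w=axb$ unbordered, both letters recurrent, $S=\{(1,i),(j,n),(k,m)\}$'' is unjustified for a general $w$ with $\mu(w)\leq 3$. Second, the heart of your argument --- that the mere fact that $S$ \emph{generates} forces the extremal Fine--Wilf relation $|x|=p+q-2$ with $\gcd(p,q)=1$, so that Lemma~\ref{CenFW2} yields centrality --- is asserted, not proved, and it is precisely the difficult point: the interior generator $(k,m)$ is essential (two generators never suffice, by the argument of Proposition~\ref{aperiodic2}), and without pinning down how $(k,m)$ sits relative to the prefix and suffix palindromes you cannot extract two coprime periods of the right lengths. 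The ``undoubling'' of $d_A$ is likewise only sketched.

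For comparison, the paper takes a different and more robust route: it defines the \emph{lean word} $u$ of $w$ (the shortest word with $w$ a factor of $d_{A(w)}(u)$) and proves that $u$ is \emph{balanced} (Lemma~\ref{conlemma}); since the finite balanced binary words are exactly the factors of Sturmian words, $w$ is then a factor of a double Sturmian word. Balancedness is proved by contradiction: an unbalanced $u$ contains $0v0$ and $1v1$ for a palindrome $v$ (Lemma~\ref{lothaire}); Lemma~\ref{zeros} (which is where the leaf-counting from Lemma~\ref{leaves} is actually deployed, on a carefully chosen minimal unbordered factor $z$ containing both $aua$ and $bub$) forces $v$ to be a power of a single letter of even length; and Lemma~\ref{triplets} together with a minimal-counterexample analysis eliminates the remaining configurations. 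This sidesteps entirely the need to identify the central structure of $w$ from its generating set, which is the step your proposal leaves open. If you want to salvage your approach, you would need to prove your claimed ``generation forces coprime periods summing to $|x|+2$'' assertion and find a way to control all of $w$, not just an unbordered factor of it.
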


In what follows we use $a$ and $b$ as variables of letters such that
 $\{a,b\}=\{0,1\}$.
The key result for the converse is stated in  Lemma~\ref{zeros} which gives a simplified
criterion for pairs witnessing that a word is unbalanced when compared to the general
case of Lemma~\ref{lothaire}. We have

\begin{lemma}[Proposition~2.1.3 in \cite{Lothaire-II}]\label{lothaire}
If $w \in \{a,b\}^*$ is unbalanced then there is a palindrome $u$ such that both $aua$ and
$bub$ are factors of $w$.
\end{lemma}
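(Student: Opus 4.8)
The plan is to extract the required palindrome from a \emph{minimal} witness of unbalance together with a counting (ballot-type) argument. First I would fix, among all pairs $(x,y)$ of factors of $w$ of equal length whose numbers of $a$'s differ by at least $2$, one of minimal common length $n$, say with $|x|_a\ge |y|_a+2$. Deleting the first letters of $x$ and $y$ (and, separately, their last letters) yields factors of length $n-1$, whose $a$-counts differ by at most $1$ by minimality. Comparing this with $|x|_a-|y|_a$ forces the difference to equal exactly $2$ and forces $x$ to begin and end with $a$ and $y$ to begin and end with $b$. Writing $x=aXa$ and $y=bYb$, the counts then give $|X|=|Y|=n-2$ and $|X|_a=|Y|_a$.

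Next I would show $X=Y$ by a ballot-type count. For $0\le i\le n-2$ put $h(i)=|X_1\cdots X_i|_a-|Y_1\cdots Y_i|_a$, so that $h(0)=h(n-2)=0$ and successive values differ by at most $1$. If $h(i)=1$ for some $i$, then the length-$(i+1)$ prefixes $aX_1\cdots X_i$ of $x$ and $bY_1\cdots Y_i$ of $y$ are factors of $w$ whose $a$-counts differ by $2$ and whose length $i+1$ is less than $n$, contradicting minimality; hence $h\le 0$ throughout. The symmetric argument applied to the length-$(i+1)$ suffixes of $x$ and $y$ rules out $h(i)=-1$, so $h\ge 0$ as well. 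Therefore $h\equiv 0$, which forces $X=Y=:u$, and consequently both $aua$ and $bub$ are factors of $w$.

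Finally I would prove that $u$ is a palindrome, which I expect to be the crux. Suppose not, and let $k$ be least with $u_k\ne u_{m-k+1}$, where $m=|u|$, and set $P=u_1\cdots u_{k-1}$, so that the last $k-1$ letters of $u$ form $P^R$. If $u_k=a$ (so $u_{m-k+1}=b$), then $aPa$ is the length-$(k+1)$ prefix of $aua$ and $bP^Rb$ is the length-$(k+1)$ suffix of $bub$; these are factors of $w$ whose $a$-counts differ by $2$, and since $k+1<n$ this contradicts minimality. The case $u_k=b$ is handled symmetrically, pairing the suffix $aP^Ra$ of $aua$ with the prefix $bPb$ of $bub$. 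Hence $u$ is a palindrome, and $aua$, $bub$ are the factors required.

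The hard part is precisely the passage from an arbitrary minimal witness to its rigid palindromic shape: the mere existence of a witness is immediate, but one must run the counting argument on \emph{both} prefixes and suffixes to collapse $X$ and $Y$ to a single word, and then convert the outermost asymmetry of a hypothetical non-palindrome into a strictly shorter witness. The points demanding care are that every auxiliary pair produced is genuinely shorter than $n$, so that minimality applies, and that the mirror block $P^R$ really occurs, which relies on $P$ sitting symmetrically inside the palindromic frame up to the first asymmetric position.
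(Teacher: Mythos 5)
Your proof is correct. Note, however, that the paper does not prove this statement at all: it is quoted verbatim as Proposition~2.1.3 of Lothaire's \emph{Algebraic Combinatorics on Words} and used as a black box, so there is no in-paper argument to compare against. What you have written is a valid self-contained reconstruction of the classical proof: a minimal-length unbalanced pair $(x,y)$ is forced into the shape $x=aXa$, $y=bYb$ with $|X|_a=|Y|_a$; the ballot function $h$ must vanish identically (a positive value gives a shorter unbalanced pair of prefixes, a negative value a shorter unbalanced pair of suffixes), so $X=Y=u$; and a first asymmetry $u_k\neq u_{m-k+1}$ would produce the shorter unbalanced pair $aPa$, $bP^Rb$ (or $bPb$, $aP^Ra$). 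All the auxiliary pairs you produce are genuinely shorter than $n$, and the identification of the last $k-1$ letters of $u$ with $P^R$ follows correctly from the minimality of $k$, whether or not $P$ and $P^R$ overlap. The only blemish is a harmless indexing slip in the $h(i)=-1$ case: the relevant suffixes of $x$ and $y$ have length $n-1-i$, not $i+1$ (equivalently, apply your prefix argument to the reversed words); the difference of $a$-counts is still $-h(i)+1=2$ and the length is still below $n$, so the contradiction stands.
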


\begin{lemma}\label{zeros}
Suppose $w\in \{a,b\}^*$  and $\mu(w)\leq 3.$  Suppose  $aua$ and $bub$ are 
palindromic factors of $w$. Then $u=a^ n$ or  $u=b^n$ for some even $n \geq 0.$
\end{lemma}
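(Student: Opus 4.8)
The plan is to argue by contradiction, reducing everything to the production of a single factor of $w$ carrying at least four palindromic generators. First note that since $aua$ is a palindrome we have $aua=au^Ra$, so $u=u^R$; thus $u$ is itself a palindrome. Suppose now, contrary to the claim, that $u$ is neither $a^n$ nor $b^n$ with $n$ even. By Lemma~\ref{heritage} we have $\mu(v)\le \mu(w)\le 3$ for every factor $v$ of $w$, so it suffices to exhibit one factor $v$ of $w$ with $\mu(v)\ge 4$.

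The engine of the argument is Lemma~\ref{leaves}: if $v$ is unbordered with both $0$ and $1$ non-uni-occurrent and $\mu(v)\le 3$, then $v$ has at most two leaves of each label. Moreover the generating set realizing $\mu(v)\le 3$ for such a $v$ is tightly constrained: its generators must connect the two end positions $1$ and $|v|$, which by unborderedness (exactly as in the proof of Proposition~\ref{aperiodic2}) forces a palindromic prefix $(1,i)$ and a disjoint palindromic suffix $(j,|v|)$ with $i<j$, leaving essentially one further palindrome to carry all the remaining identifications. The plan is therefore to locate, under the failure hypothesis, an unbordered factor $v$ of $w$ in which both letters repeat but which carries three leaves of the same label, contradicting Lemma~\ref{leaves}.

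I would split into two cases according to how $u$ fails the conclusion. If $u=c^n$ with $c\in\{a,b\}$ and $n$ odd, then from $aua$ and $bub$ the word $w$ contains both a monochromatic run $c^{n+2}$ and an isolated run $dc^nd$ (where $\{c,d\}=\{a,b\}$); here the parity of $n$ is precisely what prevents a single central palindrome from simultaneously folding the long run onto itself and matching the short isolated run, and a leaf-count on the factor spanning these two runs should yield a third $c$-leaf. If instead $u$ contains both letters, then $aua$ and $bub$ exhibit the same palindromic core $u$ with two different admissible letters on each flank; this is the unbalanced ``bispecial'' configuration underlying Lemma~\ref{lothaire}, and I would show it forces, in the spanning factor, a palindrome whose two one-letter extensions cannot both be absorbed by the prefix/suffix/middle generators, again producing an extra leaf and hence $\mu(v)\ge 4$.

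The main obstacle is the \emph{bridging} step. The hypotheses only give that $aua$ and $bub$ are factors of the same word $w$, and a priori they may occur arbitrarily far apart, so there need not be any short factor of $w$ containing both occurrences. One must therefore either pass to a shortest factor of $w$ realizing the two run-configurations and control the intervening material, or else run the leaf-count directly against a fixed size-$3$ generating set of $w$ through the path criterion of Lemma~\ref{crit}. The delicate point—and the reason no balance hypothesis is available, since $w$ itself is not assumed balanced—is to verify that whatever lies between the two occurrences cannot lower the generator count below four; this is exactly where the rigidity of Lemma~\ref{leaves} must be pushed through, and where the conclusion that $n$ is even is ultimately read off from the parity mismatch between the two runs.
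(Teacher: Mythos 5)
Your high-level strategy coincides with the paper's: pass to a minimal factor $z$ of $w$ containing both $aua$ and $bub$, observe that $z$ is unbordered, invoke Lemma~\ref{heritage} to get a generating set of size at most $3$ for $z$, and then derive a contradiction by exhibiting too many leaves of one label against Lemma~\ref{leaves}. However, the proposal stops at the level of a plan: every step that actually carries the load is deferred with ``should yield a third $c$-leaf'' or ``I would show it forces \dots an extra leaf.'' The bridging step you single out as the main obstacle is in fact the easy part, and you do not supply the one observation that makes it work: since $u$ is a palindrome, $aua$ and $bub$ cannot overlap (an overlap of length $m\ge 2$ would force $u[m-1]=b$ from the first letters and $u[m-1]=a$ from the last letters of the matched blocks), so the minimal factor $z$ begins with one of them, ends with the other, and is unbordered. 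Once that is in place, the intervening material is irrelevant; no control of it is needed.

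The genuine gap is the leaf count itself, which is where both conclusions (that $u$ is a power of a single letter \emph{and} that the exponent is even) are actually extracted. The paper pins down the generating set as $\{(1,k),(l,|z|),(i,j)\}$ with $1<k\le |u|+2<l<|z|$, deduces that $1$, $|z|$, and (after possibly reversing $z$) $2$ are leaves, and then reads off further forced leaves from the exact values of $k$, $l$, $i$, $j$: the parity argument shows $|u|$ odd would create a disallowed extra leaf (reducing to $|u|=1$ and then to the explicit word $aaavbab$, which is killed separately), and a second case analysis on $k\in\{|u|+1,|u|+2\}$, $l\in\{|z|-|u|-1,|z|-|u|\}$, and $i\in\{3,4\}$ shows $u$ has period $1$. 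None of this appears in your proposal. Your second case ($u$ contains both letters) is handled only by an appeal to the ``configuration underlying Lemma~\ref{lothaire},'' but that lemma runs in the opposite direction (unbalanced implies the $aua$/$bub$ pattern) and gives you nothing here; the period-$1$ conclusion has to come from the same positional leaf bookkeeping, not from balance, since $w$ is not assumed balanced. As written, the proposal identifies the right tools but does not contain a proof.
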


\begin{proof} If $u$ is empty we are done. Thus suppose $u$ is nonempty.  
Let $z$ be a factor of $w$ of minimal length containing both $aua$ and $bub.$ Then $z$ begins in one and ends in the other and both are uni-occurrent in $z.$ Since $u$ is a palindrome, the words $aua$ and $bub$ cannot overlap one another. Thus $z$ is unbordered.
By Lemma~\ref{heritage}, $z$ is palindromically generated by a subset $S\subseteq \S(|z|)$ with $\card S\leq 3. $
There exist positive integers $1<k\leq |u|+2<l<|z|$ such that $\{(1,k),(l,|z|)\}\subseteq S.$ Since $u$ is non-empty, $S$ must contain a third generator of the form $(i,j)$ for some $1<i<j<|z|.$ Thus $1$ and $|z|$ are both leaves of $S$ having distinct labels. Moreover since $(i,j)\neq (2,|z|-1)$ it follows that either $2$ or $|z|-1$ is also a leaf. Up to replacing $z$ by its reversal, we may suppose that $2$ is also a leaf. So $1,2,$ and $|z|$ are leaves. By Lemma~\ref{leaves}, $S$ contains at most one more leaf.

We claim that $|u|$ is even. In fact, suppose to the contrary that $|u|$ is odd.  If $k=|u|+2$ then $\frac{|u|+1}{2}+1$ is a leaf, while if $k<|u|+2$ then $|u|+2$ is a leaf. A similar argument applied to $l$ implies that either $|z|-(\frac{|u|+1}{2})$ or $|z|-|u|-1$ is a leaf. By Lemma~\ref{leaves} it follows that either
$2=|u|+2$ or $2=\frac{|u|+1}{2}+1.$ Since $u$ is assumed nonempty, we deduce that $2=\frac{|u|+1}{2}+1,$
i.e., $|u|=1.$ Thus either $z$ or its reverse is of the form $aaavbab$ for some $v\in \{a,b\}^*.$ We may suppose that $z=aaavbab.$ Thus $l=|z|-2$ and hence $1$ and $|z|-1$ are both leaves with label $a.$ By Lemma~\ref{leaves} we have $i=2.$ On the other hand, $j=|z|-1$ since the $a$ in position $|z|-1$ must belong to another generator other than $(|z|-2,|z|)$ since it is necessarily related to the other occurrences of $a$ in $z.$
But this implies that $z[2,|z|-1]$ is  a palindrome, a contradiction. 

Having established that $|u|$ is even, we will now show that $u$ has period equal to $1.$ We recall that we may assume without loss of generality that $1,2,$ and $|z|$ are leaves. Hence $k\in \{|u|+2, |u|+1\}$ and $l\in \{|z|-|u|-1, |z|-|u|\}.$ If either $k=|u|+1$ or $l= |z|-|u|,$ then $u$ has period $1.$ Thus we may suppose that $k=|u|+2$ and $l=|z|-|u|-1.$ Then $i=3$ or $i=4.$ In either case this  implies $j=|z|-1.$ In the first, we deduce that $u$  has period equal to $1.$ In the second case we deduce that $u$ has period $1$ or $2,$  but as $|u|$ is even, $u$ must have period equal to $1.$ This completes the proof.

\end{proof}

\begin{lemma}\label{triplets}
Suppose $w \in \{a,b\}^*$ be such that $\mu(w) \leq 3.$ Then
\begin{itemize}
\item[(i)]
The words $a^3$ and $b^3$ do not both occur in $w.$

\item[(ii)]
For odd $k$,  $ba^kb$  and $a^{k+2}$  do not both occur in $w.$

\item[(iii)]
The words $ba^kb$ and $a^{k+3}$ do not both occur in $w$ for any $k \geq 1.$

 \item[(iv)]
All three words $a^{k+2}$, $ba^{k+1}b$ and $ba^kb$ do not occur in $w$ for any $k
\geq 1.$
\end{itemize}
\end{lemma}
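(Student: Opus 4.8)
The plan is to prove all four statements by contradiction, in each case assuming that two (or three) forbidden factors occur simultaneously in $w$, and then deriving a contradiction with $\mu(w)\le 3$ by invoking \textbf{Lemma~\ref{zeros}} as the main engine. The key observation is that each forbidden configuration produces a pair of palindromic factors of the shape $aua$ and $bub$ (for a common palindrome $u$) to which Lemma~\ref{zeros} applies, forcing $u$ to be a power of a single letter of \emph{even} length; I would then check that the assumed factors violate this parity or power constraint. Since every factor of $w$ inherits $\mu\le 3$ by \textbf{Lemma~\ref{heritage}}, I am always free to pass to a conveniently short factor containing the relevant occurrences.

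For part~(i), if both $a^3=aaa$ and $b^3=bbb$ occur, then taking $u=a$ gives $aua=aaa$ and we need a $bub$; more directly, $aaa$ and $bbb$ are themselves of the form $a\,a\,a$ and $b\,b\,b$, and applying Lemma~\ref{zeros} with $u$ the single-letter palindrome yields that $u$ must be an even power, whereas $|u|=1$ is odd, a contradiction. For part~(ii), the factor $ba^kb$ is exactly $b\,u\,b$ with $u=a^k$, and $a^{k+2}=a\,a^k\,a=a\,u\,a$, so Lemma~\ref{zeros} applies to the pair $(aua,bub)$ with $u=a^k$; it forces $|u|=k$ even, contradicting that $k$ is odd. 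Part~(iii) is the most substantial: here $ba^kb=b\,a^k\,b$ and $a^{k+3}$ must be reconciled. I would extract from $a^{k+3}$ a palindromic factor $aua$ with $u=a^{k+1}$ (of the same letter-count parity issues) and compare it against $bub=ba^kb$; the mismatch in the length of $u$ (one occurrence demands $|u|=k$, the other $|u|=k+1$, and Lemma~\ref{zeros} demands evenness of the common $u$) gives the contradiction. The care needed is to produce the \emph{same} palindrome $u$ from both sides, so I expect to argue that the presence of $a^{k+3}$ forces $ba^{k+1}b$ or $ba^{k+2}b$ to appear, reducing to case~(ii) or to a parity clash.

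For part~(iv), I would assume all three of $a^{k+2}$, $ba^{k+1}b$, and $ba^kb$ occur. The pair $ba^{k+1}b=b\,a^{k+1}\,b$ together with $a^{k+3}$ (which, if present, is handled by (iii)) is not directly available, so instead I pair $a^{k+2}=a\,a^k\,a=aua$ with $ba^kb=bub$ using $u=a^k$: Lemma~\ref{zeros} forces $k$ even. Symmetrically, pairing $a^{k+2}$ (viewed as containing the central palindrome $a^{k+1}$ flanked appropriately, $a\,a^{k-1}\,a$ will not match, so I use $aa^{k}a$) against $ba^{k+1}b$ is the delicate matching; the right move is to note that $a^{k+2}=a\cdot a^k\cdot a$ and $ba^{k+1}b$ has the \emph{wrong} central length to share $u$, so one of the two even-length conclusions of Lemma~\ref{zeros} contradicts the other, i.e.\ $k$ and $k+1$ cannot both be even.

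The main obstacle I anticipate is the bookkeeping in parts~(iii) and~(iv): Lemma~\ref{zeros} requires a \emph{common} palindrome $u$ appearing as $aua$ and $bub$, and the forbidden factors as stated do not literally hand one such $u$ to both sides, so the crux is to identify, within the longer power $a^{k+2}$ or $a^{k+3}$, exactly which central palindromic block plays the role of $u$ so that its parity or its single-letter-power status clashes with what the factor $ba^{j}b$ forces. Once the correct $u$ is pinned down in each case, the contradiction is immediate from the even-power conclusion of Lemma~\ref{zeros}.
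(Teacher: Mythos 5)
Your part~(ii) is exactly the paper's argument: $a^{k+2}=a\,a^k\,a$ and $ba^kb$ exhibit the \emph{same} palindrome $u=a^k$ in the two roles required by Lemma~\ref{zeros}, which then forces $k$ even. But for parts~(i), (iii) and~(iv) your plan has a genuine gap: Lemma~\ref{zeros} needs a single common palindrome $u$ such that both $aua$ and $bub$ are factors, and none of these configurations supplies one. In~(i), $a^3$ gives $aua$ with $u=a$ while $b^3$ gives $bub$ with $u=b$; these are different $u$'s, and nothing forces $bab$ (or $ab^nb\cdots$) to occur. One could instead note that $w$ is unbalanced and invoke Lemma~\ref{lothaire} to obtain some common $u$, but Lemma~\ref{zeros} then only tells you that this $u$ is an even power of a single letter (possibly empty), which is not a contradiction. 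In~(iii), the occurrence of $a^{k+3}$ does \emph{not} force $ba^{k+1}b$ or $ba^{k+2}b$ to occur (the maximal block of $a$'s containing it may be longer, or lie at the boundary of $w$), so the proposed reduction to~(ii) or to a parity clash never gets off the ground; all that Lemma~\ref{zeros} yields here is that $k$ is even (pair $ba^kb$ with the factor $a^{k+2}$ of $a^{k+3}$), and the entire substance of~(iii) is the even case. In~(iv), the second application of Lemma~\ref{zeros} that you need in order to force $k+1$ even would require $a^{k+3}$ as a factor, and you only have $a^{k+2}$; so the claimed clash between ``$k$ even'' and ``$k+1$ even'' is never established.

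The paper closes cases~(i), (iii) and~(iv) with a different tool that your proposal is missing. One takes a minimal factor $z$ of $w$ (or of its reversal) containing the relevant occurrences so that they are uni-occurrent in $z$ and sit at its two ends; such a $z$ is unbordered, and by Lemma~\ref{heritage} it is palindromically generated by a set $S$ with at most three generators, two of which are a short, non-overlapping palindromic prefix and suffix whose lengths are pinned down by the uni-occurrence assumptions. One then counts leaves of $S$: in each configuration at least three positions carrying the same label are leaves, contradicting Lemma~\ref{leaves}, which allows at most two leaves per label. Some argument of this kind, beyond Lemma~\ref{zeros}, is indispensable for (i), (iii) and~(iv).
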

\begin{proof}

For Case~(i),  assume to the contrary that both $a^3$ and $b^3$ are factors of $w.$ Then $w$ contains a factor $z$ of the form 
$z=c^3ud^3$ where $\{c,d\}=\{a,b\},$  $u\in \{a,b\}^*,$ and where $c^3$ and $d^3$ are uni-occurrent in $z.$  Then $z$ is
unbordered and palindromically generated by a set $S=\{(1,k), (m,|z|),(i,j)\}$
where $k \le 3$, $m \ge |z|-2,$  $1 < i < j <|z|,$ and
either $i > 3$ or $j < |z|-2.$ This implies that $S$ either contains $3$ leaves labeled $a$ or $3$ leaves labeled $b$ contradicting
Lemma~\ref{leaves}.

Case~(ii) is merely a special case of Lemma~\ref{zeros}.

For Case~(iii), suppose to the contrary that both $ba^kb$ and $a^{k+3}$ are factors of $w.$ Then $k\geq 2$ by case $(ii).$ Then there exists $z\in \{a,b\}^+$ of the form 
$z=cd^kcud^{k+3}$ with $\{c,d\}=\{a,b\},$ $u\in \{a,b\}^*,$ with $cd^kc$ and $d^{k+3}$ uni-occurrent in $z,$ and such that $z$ or its reversal is a factor of $w.$ Thus $z$ is unbordered and palindromically generated by a set $S=\{(1,k+2), (m,|z|),(i,j)\}$
with $m \ge |z|-k-2$ and $1 < i <j < |z|.$ Thus $|z|$ is a leaf in $S$ labeled $d.$ If $i=2,$ then $|z|-1$ and $|z|-2$ are also leaves in $S$ labeled $d.$ If $i=3,$ then $2$ and $|z|-1$ are also leaves in $S$ labeled $d.$ And if $i>3,$ then
$2$ and $3$ are also leaves in $S$ labeled $d.$  In either case, we obtain a contradiction to Lemma~\ref{leaves}.

For Case~(iv), suppose to the contrary that $w$ contains $ba^kb, ba^{k+1}b,$ and $a^{k+2}$ as factors. By (ii) it follows that $k$ is even.
There are three cases to consider depending on their relative order of occurrence in $w.$ We consider only one case as the other two work analogously. In one case, $w$ contains a factor $z$ of the form $z=ba^{k+1}bua^{k+2}$  which contains an occurrence of $ba^kb$ and such that $ba^{k+1}b$ and $a^{k+2}$ are uni-occurrent in $z.$
Then $z$ is unbordered and, by Lemma~\ref{heritage}, $z$ is generated by a set $S$ with $\card S\leq 3.$ It follows that $S$ contains at least $k+2$ leaves labeled $a$ given by  $\frac{k+1}{2}+1, |z|, $ and the $k$ occurrences of $a$ arising from the occurrence in $z$ of $ba^kb.$ Since $k\geq 1, $this is a contradiction to Lemma~\ref{leaves}. 
\end{proof}


For $w\in \{0,1\}^+\cup \{0,1\}^\nats$, we define $A(w) \subseteq \{0,1\}$ by the rule $ a
\in A(w)$ if and only if
 $w$ has no factors of the form $ba^{2k+1}b$ for any $k\ge 0$ with $a \ne b$.
If $w$ is  a finite word,  we define the \emph{lean word} of $w$ to be the shortest word
$u$ such that $d_{A(w)}(u)$ contains $w$ as a factor. We extend this notion to infinite
aperiodic words $w\in \{0,1\}^\nats:$  we say $u\in \{0,1\}^\nats$ is the \emph{lean word} if $d_{A(w)}(u)$
 contains $w$ as a suffix and for all $v\in \{0,1\}^\nats,$ if $w$ is a suffix of $d_{A(w)}(v),$
 then $u$ is a suffix of $v.$ In each case the lean word of $w$ is uniquely determined by $w$.

For instance, if $w=0010011$, then $A(w)=\{0\}$ and the lean word of $w$ is $u=01011.$
Similarly if $w=011001$, then $A(w)=\{0,1\}$ and the lean word $u=0101.$ For
$w=0010110$, we have $A(w)= \emptyset$ and hence $w$ is its own lean word.

\begin{lemma}\label{conlemma}
Let $w\in \{0,1\}^*$ be a binary word with $\mu(w) \le 3$. Then the lean word $u$ of $w$
is balanced.
\end{lemma}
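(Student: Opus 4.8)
The plan is to assume $\mu(w)\le 3$ and that the lean word $u$ is unbalanced, and to derive a contradiction. Set $A=A(w)$, so $w$ is a factor of $d_A(u)$; since $u$ is the \emph{shortest} word with this property, the occurrence of $w$ inside $d_A(u)$ must overlap the images of the first and last letters of $u$, so the $d_A$-image of any factor of $u$ that avoids those two letters is itself a factor of $w$. Being unbalanced, $u$ contains $0p0$ and $1p1$ for some palindrome $p$ by Lemma~\ref{lothaire}. As $d_A$ commutes with reversal, $P:=d_A(p)$ is a palindrome, and $0P0$ and $1P1$ are factors of $w$; Lemma~\ref{zeros} then forces $P$ to be a power of a single letter, whence $p=c^m$ for some $c\in\{0,1\}$ and $m\ge 0$. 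Using the $0\leftrightarrow1$ symmetry (which also swaps the roles inside $A$) I may assume $c=0$, so $u$ contains $0^{m+2}$ and $1\,0^m\,1$.

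Transferring through $d_A$ and writing $e\in\{1,2\}$ for the number of times $d_A$ repeats the letter $0$, the word $w$ contains $0^{e(m+2)}$ together with the internal block $1\,0^{em}\,1$. Suppose first $m\ge 1$. If $0\in A$ then $e=2$, so $w$ contains $1\,0^{2m}\,1$ and $0^{2m+3}$, contradicting Lemma~\ref{triplets}(iii). If $0\notin A$ then $P=0^m$, so Lemma~\ref{zeros} makes $m$ even ($m\ge2$), and the definition of $A(w)$ guarantees an odd internal block $1\,0^{2s+1}\,1$ in $w$; applying Lemma~\ref{triplets}(iii) to $1\,0^m\,1$ shows every $0$-block has length at most $m+2$, so that $2s+1\le m+1$. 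If $2s+1\le m-1$ then $0^{2s+3}$ occurs and Lemma~\ref{triplets}(ii) is violated, while if $2s+1=m+1$ the three blocks $0^{m+2},\,1\,0^{m+1}\,1,\,1\,0^m\,1$ all occur, violating Lemma~\ref{triplets}(iv). Either way we reach a contradiction.

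There remains the case $m=0$, in which the unbalance of $u$ is exactly the simultaneous presence of $00$ and $11$ (a binary word with a block of length $\ge2$ in \emph{each} letter cannot be balanced). If $A=\{0,1\}$ these double to $0^4$ and $1^4$, so $0^3$ and $1^3$ both occur and Lemma~\ref{triplets}(i) finishes the argument. The delicate situation is $A=\emptyset$ or $A=\{0\}$ (and its mirror), where only one long block is doubled: here $w$ contains $00$ (or $0^4$) and $11$ together with an odd internal block of the \emph{undoubled} letter coming from the definition of $A(w)$, and yet none of the four clauses of Lemma~\ref{triplets} is violated --- as one checks on small instances such as $w=001000011$, which satisfies all of Lemma~\ref{triplets} but has $\mu(w)=4$. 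For this residual case I would pass to a shortest factor $z$ of $w$ carrying the relevant blocks; $z$ is unbordered, so by Lemma~\ref{leaves} it admits at most two leaves of each label, and I would argue directly, in the spirit of the proofs of Lemmas~\ref{zeros} and~\ref{triplets}, that three reflections cannot split the positions of $z$ into exactly the two classes prescribed by its letters.

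This last step is where I expect the real work to be: since Lemma~\ref{triplets} is provably insufficient by itself to exclude the $m=0$ residual configurations, one cannot merely quote it and must instead run a tailored leaf-and-connectivity count on a minimal unbordered factor, using the odd internal block supplied by the definition of $A(w)$ to force either too many equally labelled leaves or an unachievable merging of classes. A subordinate technical point, flagged in the first paragraph, is to make the boundary argument precise, i.e. to guarantee that the chosen occurrences of $0p0$ and $1p1$ (and of the odd internal block) lie away from the two extreme letters of $u$, so that their $d_A$-images genuinely occur inside $w$ rather than only inside $d_A(u)$; this is exactly what the minimality in the definition of the lean word provides.
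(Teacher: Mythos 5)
Your overall strategy coincides with the paper's: apply Lemma~\ref{lothaire} to the unbalanced lean word, push the pair $0p0$, $1p1$ through $d_A$, invoke Lemma~\ref{zeros} to force $d_A(p)$ to be an even power of a single letter, and then play the clauses of Lemma~\ref{triplets} against the odd internal block supplied by the definition of $A(w)$. Your treatment of the case $m\ge 1$ is correct and complete; the paper organizes it slightly differently (it first uses (iii) and (iv) to force $0\in A(w)$ and only then derives the contradiction, whereas you branch on whether $0\in A$ from the start), but the content is the same. Your observation that Lemma~\ref{triplets} cannot by itself dispose of the residual $m=0$ configurations --- witnessed by $001000011$, which passes all four clauses yet has an unbalanced lean word --- is accurate and is a genuine insight into why something more is needed.

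The gap is the one you flag yourself: the case $m=0$ with $A(w)\ne\{0,1\}$ is not proved, only described as the place ``where the real work'' would be. The missing idea is to run the entire argument on a \emph{shortest} counterexample $w$. The paper first normalizes, via Lemma~\ref{triplets}(i) and symmetry, so that $111$ is not a factor of $w$; then in the $m=0$ case the occurrence of $11$ in $u$ forces $1\notin A(w)$, the definition of $A(w)$ then forces $010$ into $u$, and minimality of $w$ pins the lean word down to the single family $u=00(10)^r11$, from which $0\notin A(w)$ and hence $u=w$ follow. For this explicit word the contradiction is quick: any generating set of size $3$ must contain $(1,2)$ and $(n-1,n)$ (these are the only nontrivial palindromes anchored at the ends), and the one remaining generator $(i,j)$ would have to satisfy both $i\le 2$ and $j\ge n-1$ to link the prefix $0$'s and the suffix $1$'s to the interior, yet no such factor of $00(10)^r11$ is a palindrome. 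Without this global minimality reduction, your proposed ``tailored leaf-and-connectivity count'' would have to handle an unbounded family of configurations (of which $001000011$ is one), and you give no argument for it; so as written the proof is incomplete precisely at the step you identify.
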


\begin{proof}
Let $w$ be a shortest counter example to the claim such that its lean word $u$ is
unbalanced. By appealing to symmetry and Lemma~\ref{triplets}(i), we can assume
that $w$ contains no occurrences of~$111.$

Since $u$ is not balanced, it follows from Lemma~\ref{lothaire} that $u$ contains factors $0v0$ and $1v1$ for some palindrome~$v.$
Now, also $0d_{A(w)}(v)0$ and $1d_{A(w)}(v)1$ are
palindromes, and they are factors of~$w$. By Lemma~\ref{zeros}, we have
$d_{A(w)}(v)=0^k$ for some even integer~$k$. It follows that $v=0^m$ for $m=k/2$ or
$m=k$ depending on whether $0$ is in $A(w)$ or not. Here $u$ has factors $0^{m+2}$ and
$10^m1$.

Suppose first that $k > 0$. Since $w$ has factors $0^{k+2}$ and $10^k1$, it has no blocks
of $0$'s of odd length by Lemma~\ref{triplets}(iii) and~(iv). Hence $0 \in A(w)$
by the definition of $A(w)$,  but then $d_{A(w)}(u)$ has factors $0^{2m+4}$ and
$10^{2m}1$, and thus $w$ has factors $0^{2m+3}$ and $10^{2m}1$ contradicting
Lemma~\ref{triplets}(iii).

Suppose then that $k=0$. Since now $11$ occurs in $u$ but $111$ does not occur in $w$,
we have $1\notin A(w)$. By the definition of $A(w)$, the word $010$ must be a factor of
$u$, and hence by the minimality of $w$, we have $u=00(10)^r11$ for some positive $r
\geq 1$. From this also $0 \notin A(w)$ follows, and hence  $u=w$. However, the
palindromic generators of $w$ are now $(1,2)$ that determines the prefix $w[1,2]=00$,
and $(n-1,n)$ that determines the suffix $w[n-1,n]=11$, and $(i,j)$ that determines the
factor $w[i,j]$. But the palindrome $w[i,j]$ cannot overlap with both $w[1,2]$
and~$w[n-1,n]$ and thus the two~$0$'s in $w[1,2]$ and the latter~$0$ are not equivalent or
the two~$1$'s in $w[n-1,n]$ are not equivalent to a preceding $1$; a contradiction.
\end{proof}

\begin{proof} [Proof of Proposition~\ref{converse}]
By Lemma~\ref{conlemma}, if $\mu(w) \leq 3$,  then $w$ is a factor of the word
$d_{A(w)}(u)$ for a balanced word~$u.$ Since the factors of Sturmian words are exactly
the balanced words, see~\cite{Lothaire-II}, the claim follows. This also proves
Theorem~\ref{main}.
\end{proof}

\begin{figure}
\begin{center}
\setlength{\unitlength}{1mm}
\begin{picture}(80,40)(5,-7)
  \put(0,20){\line(1,0){80}}
  \put(84,19){$d_{A(x)}(y)$}
  \put(20,1){\line(1,0){40}}
  \put(64,0){$y$}
  \put(35,1){\line(-2,3){12.7}}
  \put(45,1){\line(2,3){12,7}}
   \put(39,2){$v$}
  \put(40,-2){\oval(30,4)[b]}
  \put(39,-7){$u$}
   \put(40,22){\oval(55,4)[t]}
  \put(33,26){$d_{A(x)}(u)$}
   \put(25,1){\line(-2,3){12.7}}
  \put(55,1){\line(2,3){12,7}}
   \put(33,16){$d_{A(x)}(v)$}
\end{picture}
\end{center}
\caption{Proof of Proposition~\ref{infinite main}:
The word $v$ is extended to a lean factor $u$ of $d_{A(x)}(u)$.}\label{Figure 4}
\end{figure}
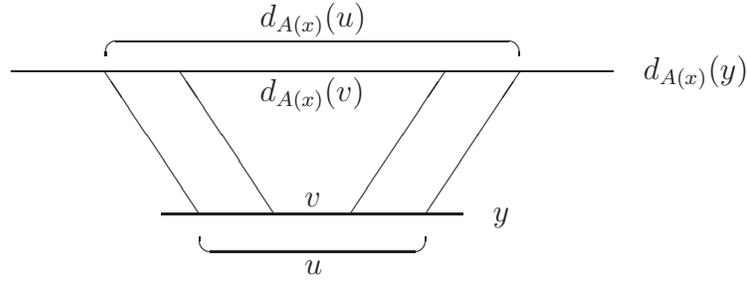

\begin{proof}[Proof of Corollary~\ref{infinite main}]
Let $x\in \{0,1\}^\nats$ be aperiodic. By Proposition~\ref{aperiodic2}, $\psi(x) \geq 3.$

If $x$ is double Sturmian then,  by Proposition~\ref{A}, $\mu(w) \le 3$ for all factors of $x$,
and thus by definition, $\psi(x)=3.$

For the converse, assume that $\psi(x) = 3$, and consider the set $A(x) \subseteq \{0,1\}$
together with the lean word $y$ of~$x.$ Then $x$ is a suffix of $d_{A(x)}(y)$. We need to
show that~$y$ is Sturmian. Let~$v$ be a factor of $y$, and let~$u$ be a factor of $y$ such
that $u$ contains $v$ and $d_{A(x)}(u)$ contains a factor of the form $ba^{2k+1}b$ with
$k\ge 0$ for each $a \notin A(x)$. Hence $A(d_{A(x)}(u))=A(x)$, and therefore $u$ is the
lean word of $d_{A(x)}(u)$; see Figure~\ref{Figure 4}. By Lemma~\ref{heritage},
$\mu(d_{A(x)}(u)) \le 3$, and thus Lemma~\ref{conlemma} yields that~$u$, and hence
also~$v$, is balanced. It follows that~$y$ is Sturmian as required.
\end{proof}

\goodbreak

\end{document}